\documentclass[12pt]{article}
\usepackage{amsmath,amssymb,amsthm,color} 
\usepackage[unicode,breaklinks=true,colorlinks=true]{hyperref}

\textheight 9in \topmargin -0.5in \textwidth 6in \oddsidemargin
0.25in \evensidemargin \oddsidemargin

\numberwithin{equation}{section}
\newtheorem{theorem}{Theorem}[section]

\newtheorem{lemma}[theorem]{Lemma}
\newtheorem{definition}[theorem]{Definition} 

\newtheorem{assumption}[theorem]{Assumption}

\theoremstyle{remark}
\newtheorem{remark}[theorem]{Remark}

\newcommand{\bke}[1]{\left( #1 \right)}
\newcommand{\bkt}[1]{\left[ #1 \right]}
\newcommand{\bket}[1]{\left\{ #1 \right\}}
\newcommand{\norm}[1]{\| #1 \|}

\newcommand{\bka}[1]{\left\langle #1 \right\rangle}

\newcommand{\al}{\alpha}

\newcommand{\e}{\epsilon}

\newcommand{\la}{\lambda}

\newcommand{\om}{{\omega}}
\newcommand{\si}{\sigma}

\renewcommand{\th}{\theta}

\newcommand{\R}{{\mathbb R }}\newcommand{\RR}{{\mathbb R }}
\newcommand{\N}{{\mathbb N}}
\newcommand{\Z}{{\mathbb Z}}
\newcommand{\cR}{{\mathcal R}}
\newcommand{\pd}{{\partial}}
\newcommand{\nb}{{\nabla}}
\newcommand{\lec}{\lesssim}

\newcommand{\I}{\infty}
 
\renewcommand{\div}{\mathop{\mathrm{div}}}

\def\si{\sigma}
\newcommand{\donothing}[1]{{}}

\newcommand{\EQ}[1]{\begin{equation}\begin{split} #1 \end{split}\end{equation}}

\DeclareMathOperator*{\esssup}{ess\,sup}

\makeatletter
\newcommand{\xRightarrow}[2][]{\ext@arrow 0359\Rightarrowfill@{#1}{#2}}
\makeatother

\begin{document}
\title{Forward discretely self-similar solutions of the Navier-Stokes equations II} 
\author{Zachary Bradshaw and Tai-Peng Tsai}
\date{\today}
\maketitle
\begin{abstract}
For any discretely self-similar, incompressible initial data $v_0$ which satisfies $\|v_0  \|_{L^3_w(\R^3)}\leq c_0$ where $c_0$ is allowed to be large, we construct a forward discretely self-similar local Leray solution in the sense of Lemari\'e-Rieusset to the 3D Navier-Stokes equations in the whole space.  No further assumptions are imposed on the initial data; in particular, the data is not required to be continuous or locally bounded on $\R^3\setminus \{0\}$.  The same method gives a third construction of self-similar solutions (after those in \cite{JiaSverak} and \cite{KT-SSHS}) and works for any $-1$-homogeneous initial data in $L^3_w$. 
\end{abstract}

\section{Introduction}
\label{sec1}

In three dimensions, the Navier-Stokes equations are
\begin{equation} 
\begin{array}{ll}\label{eq:NSE}
 \partial_t v -\Delta v +v\cdot\nabla v+\nabla \pi  = 0&\mbox{~in~}\R^3\times [0,\infty)
\\  \nabla\cdot v = 0&\mbox{~in~}\R^3\times [0,\infty),
\end{array}
\end{equation}
and are supplemented with some initial data $v_0$.  The existence of weak solutions for finite energy initial data ($v_0\in L^2$) was developed by Leray in \cite{leray} and later generalized by Hopf in \cite{hopf}, based on the a priori bound
\begin{equation}
\int |v(x,t)|^2dx + \int_0^t \int 2 |\nabla v(x,t')|^2 dx\,dt'\le \int |v_0(x)|^2dx ,
\end{equation}
the same bound as the Stokes system which is \eqref{eq:NSE} without the nonlinearity $v\cdot\nabla v$. Another important property of \eqref{eq:NSE} is its natural scaling: given a solution $v$ and $\lambda>0$, it follows that 
\begin{equation}
	v^{\lambda}(x,t)=\lambda v(\lambda x,\lambda^2t),
\end{equation}
is also a solution with associated pressure 
\begin{equation}
	\pi^{\lambda}(x,t)=\lambda^2 \pi(\lambda x,\lambda^2t),
\end{equation}
and initial data 
\begin{equation}
v_0^{\lambda}(x)=\lambda v_0(\lambda x).
\end{equation}
A solution is self-similar (SS) if it is scaling invariant with respect to this scaling, i.e.~if $v^\lambda(x,t)=v(x,t)$ for all $\lambda>0$. If this scale invariance holds for a particular $\lambda>1$, then we say $v$ is discretely self-similar with factor $\lambda$ (i.e.~$v$ is $\lambda$-DSS). Similarly $v_0$ can be SS or $\lambda$-DSS.  The class of DSS solutions contains the SS solutions since any SS $v$ is $\lambda$-DSS for any $\lambda>1$. Recall that $L^3_w(\R^3)$ is the weak Lebesgue space which is equivalent to the Lorenz space $L^{(q,r)}(\R^3)$ with $(q,r)=(3,\infty)$. The natural spaces to study SS $v_0$ and $v$ are $L^3_w(\R^3)$ and $L^\infty(0,\infty;L^3_w(\R^3))$.  

Self-similar solutions are determined by the behavior at any fixed time. This leads to an ansatz of $v$ in terms of a time-independent profile $u$, namely,
\begin{equation}\label{ansatz1}
v(x,t) = \frac 1 {\sqrt {2t}}\,u\bigg(\frac x {\sqrt{2t}}\bigg), 
\end{equation} 
where $u$ solves the \emph{Leray equations}
\begin{equation} 
\begin{array}{ll}\label{eq:stationaryLeray}
 -\Delta u-u-y\cdot\nabla u +u\cdot \nabla u +\nabla p = 0&\mbox{~in~}\R^3
\\  \nabla\cdot u=0&\mbox{~in~}\R^3,
\end{array}
\end{equation}
in the variable $y=x/\sqrt t$.
Similarly, $\lambda$-DSS solutions are decided by their behavior on the time interval $1\leq t\leq \lambda^2$ and we have
\begin{equation}\label{ansatz2}
v(x,t)=\frac 1 {\sqrt{2t}}\, u(y,s),
\end{equation}
for
\begin{equation}\label{variables}
y=\frac x {\sqrt{2t}},\quad s=\log(\sqrt{2t}),
\end{equation}
where $u$ is time-periodic with period $\log(\lambda)$ and solves the \emph{time-dependent Leray equations}
\begin{equation} 
\begin{array}{ll}
\label{eq:timeDependentLeray}
 \partial_s u-\Delta u-u-y\cdot\nabla u +u\cdot \nabla u +\nabla p = 0&\mbox{~in~}\R^3\times \R
\\  \nabla\cdot u = 0&\mbox{~in~}\R^3\times \R.
\end{array}
\end{equation}
Note that the \emph{self-similar transform} \eqref{ansatz2}--\eqref{variables} gives a one-to-one correspondence of solutions of \eqref{eq:NSE} and that of \eqref{eq:timeDependentLeray}. Moreover, when $v_0$ is SS or DSS, the initial condition $v|_{t=0}=v_0$ corresponds to a boundary condition for $u$ at spatial infinity, see Section \ref{sec:DSS}.

The fact that \eqref{eq:stationaryLeray} is time-independent motivates an analogy between the self-similar profile and solutions to the steady state Navier-Stokes equations.  It is known for certain large data and appropriate forcing that solutions to the stationary Navier-Stokes boundary value problem are non-unique \cite{Galdi,Temam}.  In \cite{JiaSverak}, Jia and \v Sver\'ak conjecture that similar non-uniqueness results might hold for solutions to \eqref{eq:stationaryLeray}.  These solutions would necessarily involve large data but, until recently, existence results for self-similar solutions were only known for small data (for small data existence of forward self-similar solutions see \cite{GiMi,Kato,CP,Koch-Tataru}).  Jia and \v Sver\'ak addressed this in \cite{JiaSverak} where they proved the existence of a forward self-similar solution using Leray-Schauder degree theory for \emph{large $-1$-homogeneous} initial data which is \emph{locally H\"older continuous} away from the origin. Similar results were later proven in \cite{Tsai-DSSI} for $\lambda$-DSS solutions with factor \emph{close to one} where closeness is determined by the local H\"older norm of $v_0$ away from the origin.  It is also shown in \cite{Tsai-DSSI} that the closeness condition on $\lambda$ can be eliminated if the initial data is axisymmetric with no swirl. 
In  Korobkov-Tsai \cite{KT-SSHS}, the existence of self-similar solutions on the half space (and the whole space) is established for appropriately smooth initial data.  The approach of \cite{KT-SSHS} differs from \cite{JiaSverak} and \cite{Tsai-DSSI} in that the existence of a solution to the stationary Leray equations \eqref{eq:stationaryLeray} is established directly.  It also gives a second proof of the main result of \cite{JiaSverak}.  {A new approach is necessary in 
\cite{KT-SSHS} due to lack of spatial decay estimates, which gives  \emph{global compactness} needed by  the Leray-Schauder theorem in 
\cite{JiaSverak} and \cite{Tsai-DSSI}.}

The main goal of the present paper is to construct $\lambda$-DSS solutions for any $\lambda>1$ for a very general class of $L^3_w(\R^3)$, possibly rough data. {A key difference between this paper and  \cite{JiaSverak, Tsai-DSSI, KT-SSHS} is the lack of \emph{local compactness}, which is required by the Leray-Schauder theorem and is provided by the regularity theory. In contrast, the regularity of general DSS solutions is not known yet.
}

Since $L^3_w(\R^3)$ embeds continuously into the space of uniformly locally square integrable functions $L^2_{u\,loc}$ it is appropriate to seek \emph{local Leray solutions}. For our purpose, we only consider global in time solutions.

\begin{definition}[Local Leray solutions]\label{def:localLeray} A vector field $v\in L^2_{loc}(\R^3\times [0,\infty))$ is a local Leray solution to \eqref{eq:NSE} with divergence free initial data $v_0\in L^2_{u\,loc}$ if:
\begin{enumerate}
\item for some $\pi\in L^{3/2}_{loc}(\R^3\times [0,\infty))$, the pair $(v,\pi)$ is a distributional solution to \eqref{eq:NSE},
\item for any $R>0$, $v$ satisfies
\begin{equation}\notag
\esssup_{0\leq t<R^2}\,\sup_{x_0\in \R^3}\, \int_{B_R(x_0 )}\frac 1 2 |v(x,t)|^2\,dx + \sup_{x_0\in \R^3}\int_0^{R^2}\int_{B_R(x_0)} |\nabla v(x,t)|^2\,dx \,dt<\infty,\end{equation}
\item for any $R>0$, $v$ satisfies
\begin{equation}\notag 
\lim_{|x_0|\to \infty} \int_0^{R^2}\int_{B_R(x_0 )} | v(x,t)|^2\,dx \,dt=0,
\end{equation}
\item for all compact subsets $K$ of $\R^3$ we have $v(t)\to v_0$ in $L^2(K)$ as $t\to 0^+$,
\item $v$ is suitable in the sense of Caffarelli-Kohn-Nirenberg, i.e., for all cylinders $Q$ compactly supported in  $ \R^3\times(0,\infty )$ and all non-negative $\phi\in C_0^\infty (Q)$, we have 
\begin{equation}\label{eq:localEnergyNSE}
2\int \int |\nabla v|^2\phi\,dx\,dt \leq \int\int |v|^2(\partial_t \phi + \Delta\phi )\,dx\,dt +\int\int (|v|^2+2\pi)(v\cdot \nabla\phi)\,dx\,dt.
\end{equation}
\end{enumerate}
\end{definition}

The concept of local Leray solutions was introduced by  Lemari\`e-Rieusset \cite{LR}, where he showed the existence of global in time  local Leray solutions if $v_0$ further belongs to $E_2$,  the closure of $C_0^\infty$ in the $L^2_{u\,loc}(\R^3)$ norm.  See Kikuchi-Seregin \cite{KiSe} for more details. In particular, condition 3 justifies a formula of the pressure $\pi$ in terms of the velocity $v$, see \cite[(1.9)]{KiSe} and
 \cite[(3.3)]{JiaSverak}.

The definition of suitability appearing above is taken from \cite{JiaSverak} and \cite{LR}.  It defines the local energy estimate in terms of test functions compactly supported away from $t=0$.  This is an unnecessary restriction.  In particular, conditions 4 and 5 from Definition \ref{def:localLeray} together imply the local energy inequality is valid for test functions with support extending down to $t=0$. In this case $\int  |v(0)|^2\phi\,dx$ should be added to the right hand side of \eqref{eq:localEnergyNSE}.

Let $e^{t\Delta}v_0(x)=\int_{\R^3} (4\pi t)^{-3/2}e^{-|x-z|^2/t}v_0(z)\,dz$; this is the solution to the homogeneous heat equation in $\R^3$. Our main objective is to prove the following theorem.

\begin{theorem}\label{thrm:main}
Let $v_0$ be a divergence free, $\lambda$-DSS vector field for some $\lambda >1$ and satisfy 
\begin{equation}\label{ineq:decayingdata}
\|v_0\|_{L^3_w(\R^3)}\leq c_0,
\end{equation} for a possibly large constant $c_0$. Then, there exists a local Leray solution $v$ to \eqref{eq:NSE} which is $\lambda$-DSS and additionally satisfies
\begin{equation}\notag
 \|  v(t)-e^{t\Delta}v_0 \|_{L^2(\R^3)}\leq C_0\,t^{1/4}
\end{equation}
for any $t\in (0,\infty)$ and a constant $C_0=C_0(v_0)$.
\end{theorem}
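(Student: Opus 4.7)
The plan is to reformulate the problem via the self-similar transform \eqref{ansatz2}--\eqref{variables}, so that $\lambda$-DSS solutions of \eqref{eq:NSE} correspond to $T$-periodic solutions of the time-dependent Leray system \eqref{eq:timeDependentLeray} with $T=\log\lambda$, and to construct such periodic solutions via a perturbative/fixed-point argument that works in the Leray energy class without relying on any regularity theory for $v$ itself.

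First I would split off the linear background. Set $U(x,t)=e^{t\Delta}v_0(x)$, which is divergence free and $\lambda$-DSS because the heat semigroup commutes with the scaling, and which is controlled in $L^\infty_t L^{3,\infty}_x$ together with standard gain-of-integrability estimates because $\|v_0\|_{L^3_w}\le c_0$. Write $v=U+w$; then $w$ formally satisfies
\EQN{
 \pd_t w -\Delta w + w\cdot\nabla w + U\cdot\nabla w + w\cdot\nabla U + \nabla\pi_w = -U\cdot\nabla U,\quad \nabla\cdot w=0,
}
with $w|_{t=0}=0$. Under the self-similar transform this becomes a periodically forced time-dependent Leray-type equation for $W(y,s)$ on $\R^3\times\R/T\Z$, in which the drift term $-y\cdot\nabla W$ will, after integration by parts against $W$, produce the coercive contribution $\frac12\int|W|^2$ that compensates the absence of an initial-data term in a periodic setting.

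Second, I would establish the key a priori estimate: formally testing the $W$ equation against $W$ and integrating on one period gives control of $\esssup_s\|W(\cdot,s)\|_{L^2}$ and $\int_0^T\|\nabla W\|_{L^2}^2\,ds$ in terms of quantities built from $U$ (and hence from $\|v_0\|_{L^3_w}$), using H\"older in Lorentz spaces to bound $\int U\cdot\nabla W\cdot W$ and $\int U\otimes U:\nabla W$. This is the obstacle the theorem sidesteps by passing to the perturbation $W$: the singular $L^3_w$ part lives in $U$, while $W$ is a genuine $L^2$-energy object, for which one has the classical coercive structure.

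Third, I would realize these estimates through an approximation scheme. Replace $v_0$ by mollifications $v_0^{(k)}$ that remain $\lambda$-DSS and converge in $L^3_w$, which yields regular backgrounds $U^{(k)}$; for each $k$ construct a periodic weak solution $W^{(k)}$ of the mollified system by a Galerkin-in-$y$ plus Poincar\'e-map fixed-point argument. Concretely, for the Galerkin system the period map $W(0)\mapsto W(T)$ is a continuous, compact self-map of a suitable ball in a finite-dimensional divergence-free subspace (compactness uses Aubin--Lions on $[0,T]$), so Brouwer/Schaefer gives a fixed point, i.e.\ a periodic Galerkin solution. The uniform a priori bound from the previous step survives the Galerkin limit, and the local energy inequality \eqref{eq:localEnergyNSE} is propagated in the standard Caffarelli--Kohn--Nirenberg way. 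Undoing the similarity transform gives, for each $k$, a $\lambda$-DSS local Leray solution $v^{(k)}=U^{(k)}+w^{(k)}$ in the sense of Definition \ref{def:localLeray}, with the rescaled energy bound yielding precisely $\|v^{(k)}(t)-e^{t\Delta}v_0^{(k)}\|_{L^2}\lesssim t^{1/4}$.

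Finally, I would pass $k\to\infty$. The uniform local-Leray bounds of condition 2 in Definition \ref{def:localLeray}, combined with the DSS scaling that transfers global control to local control, give enough compactness (Aubin--Lions locally in $(x,t)$) to extract a limit $v$ which inherits the DSS property, the decay at spatial infinity (condition 3), the attainment of initial data (condition 4), and the suitability inequality (condition 5) by lower semicontinuity. The hard part, and the real reason a new approach is needed here rather than the Leray--Schauder route of \cite{JiaSverak,Tsai-DSSI,KT-SSHS}, is the absence of local compactness for DSS solutions at this level of regularity; the perturbation splitting $v=U+w$ is what resolves it, because it isolates the rough scale-invariant object $U$ and reduces the construction to a purely energy-class problem for $w$, where Aubin--Lions compactness over a single period and the linear representation of $U$ together replace the missing regularity theory.
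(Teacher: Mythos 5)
Your overall strategy --- self-similar transform to a $T$-periodic Leray system with $T=\log\lambda$, splitting off the caloric background, Galerkin in $y$ plus a Brouwer fixed point for the period map, then undoing the transform --- is exactly the skeleton of the paper's proof. But your second step, the a priori estimate, has a genuine gap that the paper's Lemma \ref{lemma:W} exists precisely to fill. If you decompose $v=e^{t\Delta}v_0+w$ directly (in self-similar variables, $u=U_0+W$ with $U_0=\sqrt{2t}\,e^{t\Delta}v_0$), the energy identity for $W$ contains the trilinear term $\int (W\cdot\nabla W)\cdot U_0$. H\"older in Lorentz spaces gives at best $|\int (W\cdot\nabla W)\cdot U_0|\lesssim \|U_0\|_{L^{3,\infty}}\|W\|_{H^1}^2\lesssim c_0\|W\|_{H^1}^2$, and since $c_0$ is \emph{large} this cannot be absorbed by the dissipation together with the coercive term $\tfrac12\int|W|^2$ coming from the drift: the obstruction is the size of the background in a \emph{critical} norm, and no amount of $L^2$-coercivity in $W$ compensates a large critical coefficient. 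Your proposal as written proves the theorem only for small $c_0$, which is the already-known regime.

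The paper's fix is to replace $U_0$ by a revised profile $W=\xi U_0+w$ (Lemma \ref{lemma:W}), where $\xi$ cuts off a large ball around the origin and $w$ is a Bogovskii-type correction restoring $\div W=0$. Because $v_0\in L^3_w$ is DSS, $e^{t\Delta}v_0$ decays at spatial infinity in every subcritical norm $L^q$, $q>3$ (Lemmas \ref{lemma:equivalence}--\ref{lemma:V0decay}), so $\|W\|_{L^\infty_sL^q}$ can be made arbitrarily small by choosing the cutoff radius large, while $U_0-W\in L^\infty_sL^2$ keeps the boundary condition at infinity intact. The smallness of $W$ in $L^q$, $q>3$, is what lets the trilinear term be absorbed ($|(U_k\cdot\nabla W,U_k)|\le \tfrac18\|U_k\|_{H^1}^2$); the large part of $U_0$ is shunted into the source $\mathcal R(W)$, which enters the estimate only linearly in $U$ and so tolerates large constants. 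Without this cutoff idea your scheme's self-map property for the period map, and hence the Brouwer step, cannot be established for large data. (Two smaller points: in finite dimensions Brouwer needs only continuity, not compactness, of the period map; and you would still need to construct the pressure and show $p\in L^{3/2}_{loc}$ uniformly in the approximation parameter to pass the local energy inequality to the limit --- the paper does this via Riesz transforms and a uniqueness argument for the Stokes system.)
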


\noindent Comments on Theorem \ref{thrm:main}:

\begin{enumerate} %
\item The constant $c_0$ is allowed to be large. The condition $v_0\in L^3_w(\R^3)$ is weaker than initial conditions found in previous constructions for large data.  In particular, \cite{JiaSverak,Tsai-DSSI} require $v_0$ to be in $C^\alpha_{loc}(\R^3\setminus \{  0\})$ for some $\alpha>0$.  Additionally, in contrast to \cite{Tsai-DSSI}, our construction does not restrict the size of $\lambda$. 

\item When $v$ is strictly DSS, smoothness is not known \emph{a priori}. In contrast, the DSS solutions constructed in \cite{Tsai-DSSI} are smooth.  This is a consequence of the fact that, whenever $\lambda-1$ is sufficiently small, a local regularity theory is available for $\lambda$-DSS solutions in the local Leray class.  One may say that \cite{Tsai-DSSI} constructs strong DSS solutions for special (large) initial data
while this paper considers weak DSS solutions for general initial data.

\item When $v_0$ is $\la$-DSS, it will be shown in Lemma \ref{lemma:equivalence} that \eqref{ineq:decayingdata} is equivalent to $v_0 \in L^3(A_1)$ where $A_1 = \{ x \in \R^3: 1\le |x|< \la\}$. In fact, Theorem \ref{thrm:main} is true under a weaker condition on $v_0$ than \eqref{ineq:decayingdata}. Recall that
the Morrey space $M^{p,\al}= M^{p,\al}(\R^3)$ is the collection of functions $f$ such that $\norm{f}_{M^{p,\al}} :=\sup_{x \in \R^3, r>0} \bket{ r^{-\al} \int_{B(x,r)} |f|^p}^{1/p} < \I$. 
Theorem \ref{thrm:main} remains valid if we replace \eqref{ineq:decayingdata} by
\begin{equation}\label{data.Morrey}
v_0 \in L^2(A_1), \quad \limsup_{r \to 0_+} \sup_{x \in A_1} r^{-1} \int_{B(x,r)} |v_0|^2 \le \e_0,
\end{equation}
for some constant $\e_0>0$ sufficiently small.  Such $v_0$ is in $M^{2,1}$ since it is $\la$-DSS.
Condition \eqref{data.Morrey}
with $\e_0=0$ is all we need to prove Lemma \ref{th:2.1} on Assumption \ref{AU_0} on the profile $U_0$, and our construction actually only needs a weakened Assumption \ref{AU_0} ($\lim_{R \to \I} \Theta(R) \le \e_0$) allowing small $\e_0>0$. For the convergence of the solution $v(t)$ to initial value $v_0$ as $t \to 0$,
although $e^{t \Delta}$ is not a $C_0$-semigroup on the Morrey space $ M^{2,1} $
as noted in Kato \cite[Remark 2.3]{Kato}, it is a $C_0$-semigroup on weighted $L^2$ spaces  $L^2_{-k/2}=\{ f: \int _{\R^3}\frac {|f(x)|^2}{(1+|x|)^k} dx < \I \}$,  which  
 $M^{2,1}$ is imbedded into if $k>1$. Thus $e^{t \Delta}v_0 \to v_0$ as $t \to 0$ in the norm of $L^{2}_{-k/2}$, which implies local $L^2$-convergence.

\item
An example of initial data in $M^{2,1}$ that does not satsfy \eqref{data.Morrey} is the following.
Denote by $\chi_S$ the characteristic function for the set $S$. Fix $x_0=(1,0,0)$ and let $x_k=2^kx_0$ and $B_k=B_{2^{k-2}}(x_k)$ denote the ball in $\R^3$ centered at $x_k$ of radius $2^{k-2}$.  Let
\begin{equation}
u=\sum_{k\in \mathbb Z}u_k,\quad
\text{where}
\quad
u_k(x)= \frac {\chi_{B_k}(x)} {|x-x_k|}.
\end{equation}
For $\lambda=2$ and all $k\in \mathbb Z$ we have $\lambda u_k(\lambda x)=u_{k-1}(x)$ and it follows that $u$ is $\lambda$-DSS.  This function belongs to $M^{p,3-p}(\R^3)\setminus L^3_w{(\R^3)}$ for $1\leq p<3$ (these are the critical Morrey spaces).  Our approach breaks down for data like this (unless we multiply it by a small number) because we are unable to control the spatial decay of $e^{t\Delta}v_0$. If $v_0\in L^3_w$ on the other hand, discrete self-similarity implies some spatial decay -- this will be made clear in Lemma \ref{lemma:V0decay}.  

\item  For the usual Leray-Hopf weak solutions, it is well known that the hypothetical singular set is contained in a  compact subset of space-time.  We would call this property {\it eventual regularity}. The eventual regularity of local Leray solutions is unclear:
If a $\la$-DSS solution $v$ is singular at some point $(x_0,t_0)$, it is also singular at $(\la^k x_0,\la^{2k} t_0)$ for all integers $k$. 
Since $v$ is regular if it is SS or if $\la$ is close to one, the possibility of a non-compact singular set for some local Leray solutions is suggested by
Theorem \ref{thrm:main}, but not by \cite{JiaSverak,Tsai-DSSI}.

\end{enumerate} %

Our approach is similar to \cite{KT-SSHS} in that we prove a priori estimates for  the Leray equations \eqref{eq:stationaryLeray} and
directly prove the existence of the ansatz \eqref{ansatz2} (in \cite{KT-SSHS} this is done for a half space version of \eqref{ansatz1}).  In contrast, the solutions of \cite{JiaSverak} and \cite{Tsai-DSSI} are constructed using the Leray-Schauder fixed point theorem
for the equation for $\tilde v = v-e^{t\Delta}v_0$, namely,
\begin{equation}\notag
 \tilde v = K(\tilde v) :=T(F(e^{t\Delta}v_0+\tilde v)), \quad K:X \to X,
\end{equation}
where $T$ is essentially the Stokes solver, i.e., the solution operator of the time-dependent Stokes system, $F(u)=u\otimes u$ is the nonlinearity, and $X$ is some function class for $\tilde v$. In this approach one needs to show the compactness of $K(\tilde v)$ which involves the spatially local and asymptotic properties of $K(\tilde v)$. When the norm of $X$ is subcritical or critical (e.g. Prodi-Serrin class), the local compactness is provided by the regularity theory. When the norm of $X$ is {\em supercritical} (e.g.~energy class), the usual bootstrap argument does not provide better regularity.
One may hope to gain the local compactness using the local energy inequality, but this is nonlinear and not well-defined for the Stokes solver $T$. Hence, local energy inequality is likely not available for $K(\tilde v)$ even if it holds for $\tilde v$.

The difference between  \cite{KT-SSHS} and this paper is that \cite{KT-SSHS} proves the a priori bounds for \eqref{eq:stationaryLeray} 
by a contradiction argument and a study of the Euler equations, and constructs the solutions by the method of invading domains, while the current paper proves the bound directly with a computable constant, and constructs the solutions in the whole space directly. 

The key observation leading to the explicit a priori bound is the following: If a solution $u(y,s)$ of the Leray equations \eqref{eq:timeDependentLeray}
asymptotically agrees with a given $U_0(y,s)$, (e.g.~$u-U_0 \in L^\I(\R,L^2(\R^3))$), then the difference $U=u-U_0$  formally satisfies 
\begin{equation}\label{eq1.14}
 \int_0^T\! \int \bke{ |\nb U|^2 + \frac 12 |U|^2 }= \int_0^T \!\int (U \cdot \nb) U \cdot U_0 - \int_0^T\! \int \mathcal R(U_0)\cdot U ,
\end{equation}
where the source term $\mathcal R(U_0)$ will be given in \eqref{RW.def}.
The integral {$\iint (U\cdot \nb) U \cdot U_0$} is usually out of control for large $U_0$, but now we have $\iint |V|^2$ on the left side (which is not available for Navier-Stokes), and $U_0$ decays. Thus this trouble term can be controlled if the local part of $U_0$ is suitably ``cut off.'' See Section \ref{sec2} for details.

\medskip
If $v_0$ is SS, we can use our result to construct a SS solution by considering the sequence of solutions obtained from Theorem \ref{thrm:main} treating the data as $\lambda_k$-DSS for an appropriate sequence $\lambda_k$ which decreases to $1$ as $k\to\infty$. Alternatively, one can also construct the SS solution directly without involving the time dependence.
Indeed we have the following theorem.

\begin{theorem}
\label{thrm:selfsimilardata}
Let $v_0$ be a $(-1)$-homogeneous divergence free vector field in $\R^3$ which satisfies \eqref{ineq:decayingdata}
for a possibly large constant $c_0$. Then, there exists a local Leray solution $v$ to \eqref{eq:NSE} which is self-similar and additionally satisfies
\begin{equation}\notag
 \|  v(t)-e^{t\Delta}v_0 \|_{L^2(\R^3)}\leq C_0\,t^{1/4}
\end{equation}
for any $t\in (0,\infty)$ and a constant $C_0=C_0(v_0)$.
\end{theorem}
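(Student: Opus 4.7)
The plan is to deduce Theorem \ref{thrm:selfsimilardata} from Theorem \ref{thrm:main} by a compactness argument, exploiting the fact that a $(-1)$-homogeneous divergence-free $v_0$ is automatically $\la$-DSS for every $\la>1$. Fix a sequence $\la_k \searrow 1$ (e.g.\ $\la_k = 2^{1/k}$) and apply Theorem \ref{thrm:main} with DSS factor $\la_k$ to produce a $\la_k$-DSS local Leray solution $v^{(k)}$ satisfying $\|v^{(k)}(t)-e^{t\Delta}v_0\|_{L^2}\le C_0 t^{1/4}$. The first task is to confirm, by inspecting the proof of Theorem \ref{thrm:main}, that $C_0$ together with the local Leray bounds for $v^{(k)}$ may be chosen independently of $k$: the only place the self-similarity factor enters is through the time-periodic profile and the a priori identity \eqref{eq1.14}, and both quantities are ultimately controlled by $\|v_0\|_{L^3_w}$ alone.

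Given such uniform bounds, standard local energy / Aubin-Lions compactness yields a subsequence, still denoted $v^{(k)}$, converging in $L^2_{loc}(\R^3\times(0,\I))$ and a.e., with $\nb v^{(k)}\rightharpoonup \nb v$ weakly in $L^2_{loc}$. Each condition in Definition \ref{def:localLeray} is then verified for $v$: the Navier-Stokes equation holds distributionally, the local $L^\I_tL^2\cap L^2_tH^1$ bounds are preserved by weak lower-semicontinuity, the uniform spatial decay of item 3 transfers to the limit, the initial condition follows from the uniform $\|v^{(k)}(t)-e^{t\Delta}v_0\|_{L^2}\le C_0 t^{1/4}$ estimate, and the Caffarelli-Kohn-Nirenberg suitability inequality passes to the limit via strong $L^2_{loc}$ convergence of $v^{(k)}$ together with convergence of the associated pressures built from the formula of \cite{KiSe}.

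To upgrade discrete self-similarity to full self-similarity, fix $\mu>0$ and choose $n_k = \lfloor \log\mu / \log\la_k\rfloor$, so $\la_k^{n_k}\to\mu$. Iterating the $\la_k$-DSS identity gives
\EQN{
 v^{(k)}(x,t) = \la_k^{n_k}\, v^{(k)}\bke{\la_k^{n_k} x,\, \la_k^{2n_k}t},
}
and a.e.\ convergence along the fixed subsequence lets us pass to the limit for a.e.\ $(x,t)$ to obtain $v(x,t) = \mu\, v(\mu x, \mu^2 t)$. A single subsequence works simultaneously for all rational $\mu>0$ by a diagonal extraction, and continuity of the scaling action in $\mu$ (via the ansatz \eqref{ansatz1}) extends the identity to all $\mu>0$. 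Hence $v$ is self-similar.

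The main obstacle is the uniformity of constants as $\la_k\to 1$: the time-periodic profile of $v^{(k)}$ lives on a shrinking interval $[1,\la_k^2]$, and one must verify that the a priori estimates at the heart of Theorem \ref{thrm:main} do not degenerate in this limit. A secondary subtlety is ensuring that the pressures associated to $v^{(k)}$ converge strongly enough to pass the local energy inequality to the limit; this is precisely where the local character of suitability, rather than a global energy identity, is essential. Once both points are handled, the limit $v$ inherits every property required by Theorem \ref{thrm:selfsimilardata}.
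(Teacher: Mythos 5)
Your overall strategy --- view the $(-1)$-homogeneous $v_0$ as $\la_k$-DSS for a sequence $\la_k\searrow 1$, apply Theorem \ref{thrm:main}, and pass to the limit --- is the same as the paper's first proof, but both of the points you yourself flag as delicate are left unresolved, and the paper resolves them by different (and cleaner) means. On the uniform-in-$k$ bounds: you propose to re-inspect the proof of Theorem \ref{thrm:main} and assert that everything is controlled by $\|v_0\|_{L^3_w}$ alone, but you never carry this out, and as stated it is not quite accurate --- the constants in Section \ref{sec2} depend on the cut-off radius $R_0$, hence on the decay function $\Theta$ and the modulus $\om(r)$, not only on the norm (for homogeneous data $U_0$ given by \eqref{def:U0} is independent of $s$ and of $\la_k$, so the claim is ultimately salvageable, but that is exactly the verification you owe). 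The paper sidesteps this bookkeeping entirely: all the approximating solutions lie in the class $\mathcal N(v_0)$ of local Leray solutions with the \emph{same} initial data, and the standard a priori estimate for that class (quoted from \cite{JiaSverak}) bounds the local energy and enstrophy on $[0,\sigma(r) r^2]$ purely in terms of $\sup_{x_0}\int_{B_r(x_0)}|v_0|^2$, finite since $L^3_w\subset M^{2,1}$; this is manifestly independent of $k$ and of how each $v^{(k)}$ was constructed.

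On upgrading DSS to SS: passing to the limit in $v^{(k)}(x,t)=\la_k^{n_k}v^{(k)}(\la_k^{n_k}x,\la_k^{2n_k}t)$ ``for a.e.\ $(x,t)$'' does not follow from a.e.\ convergence of $v^{(k)}$, because the argument points move with $k$; you need the strong $L^2_{loc}$ convergence together with continuity of dilations on $L^2$, and the identity is then obtained in $L^2_{loc}$, not pointwise. This is repairable, but the paper's device avoids it: choosing the nested sequence $\la_k=2^{(2^{-k})}$, so that $\la_{k+1}^2=\la_k$, every $v_l$ with $l\ge k$ is automatically $\la_k$-DSS, hence so is the limit $v$ for every $k$; in self-similar variables $u-U_0$ is a weakly continuous $L^2$-valued function that is $T_k$-periodic with $T_k\to 0$, and therefore constant in $s$. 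No approximation of arbitrary $\mu$ by powers of $\la_k$ and no diagonal extraction over rational $\mu$ is needed. (The paper also gives a second, independent proof that bypasses the limiting argument altogether by solving the stationary Leray equations directly via a Galerkin scheme with the explicit a priori bound coming from \eqref{eq1.14}; you do not attempt that route.)
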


This solution is infinitely smooth, as are the solutions from \cite{JiaSverak}, see e.g.~\cite{Grujic}.

As mentioned earlier, the first construction of large self-similar solutions is given in  \cite{JiaSverak} using local H\"older estimates and the Leray-Schauder theorem. The second construction is given in \cite{KT-SSHS} using an a priori bound for Leray equations derived by a contradiction argument and a study of the Euler equations. The current paper provides a new (third) construction based on the explicit a priori bound.

We expect our method could give an alternative construction of self-similar solutions in the \emph{half space} $\R^3_+$, after the first one in \cite{KT-SSHS}:
Assuming the decay estimates for $e^{-A}v_0$ of  \cite{KT-SSHS}, one could get a priori bounds by suitable cut-off (this requires some work) and avoid the contradiction argument.
This solution is only in distributional sense and the pressure is not defined. %

The remainder of this paper is organized as follows. In Section 2 we construct solutions to the time periodic Leray system on $\R^3\times \R$ which satisfy a local energy estimate.  In Section 3 we use these solutions to recover a discretely self-similar local Leray solution on $\R^3\times (0,\infty)$, thereby proving Theorem \ref{thrm:main}.  Finally, in Section 4 we give two proofs of Theorem \ref{thrm:selfsimilardata}.  One is essentially a corollary of Theorem \ref{thrm:main} while the other constructs a stationary weak solution to Leray's equation directly.

\medskip
\emph{Notation.}\quad We will use the following function spaces:
\begin{align*}
&\mathcal V=\{f\in C_0^\infty({ \R^3;\R^3}) ,\, \nabla \cdot f=0 \},
\\& X = \mbox{the closure of~$\mathcal V$~in~$H_0^1(\R^3)$} ,
\\& H = \mbox{the closure of~$\mathcal V$~in~$L^2(\R^3)$},
\end{align*}where $H_0^1(\R^3)$ is the closure of $C_0^\infty(\R^3)$ in the Sobolev space $H^1(\R^3)$.  Let $X^*(\R^3)$ denote the dual space of $X(\R^3)$. 
Let $(\cdot,\cdot)$ be the $L^2(\R^3)$ inner product and $\langle\cdot,\cdot\rangle$ be the dual product for $H^1$ and its dual space $H^{-1}$, or that for $X$ and $X^*$.
Denote by  $\mathcal D_T$ the collection of all smooth divergence free vector fields in $\R^3 \times \R$ which 
are time periodic with period $T$ and whose supports are compact in space.

\section{The time periodic Leray system}\label{sec2}

In this section we construct a periodic weak solution to the Leray system
\begin{equation}
\label{eq:wholeSpaceLeray}
\begin{array}{ll}
	\partial_s u -\Delta u=u+y\cdot \nabla u -\nabla p -u\cdot\nabla u	&\mbox{~in~}\R^3\times \R
	\\  \nabla\cdot u = 0  &\mbox{~in~}\R^3\times \R
	\\ 	\displaystyle \lim_{|y_0|\to\infty} \int_{B_1(y_0)}|u(y,s)-U_0(y,s)|^2\,dx= 0& \mbox{~for all~}s\in \R
	\\  u(\cdot,s)=u(\cdot, s+T) &\mbox{~in~}\R^3\mbox{~for all~}s\in \R,
\end{array}
\end{equation}
for a given $T$-periodic divergence free vector field $U_0$.  Here, $U_0$ serves as the boundary value of the system and is required to satisfy the following assumption.

\begin{assumption} \label{AU_0}
The vector field $U_0(y,s) :\R^3 \times \R \to \R^3$ is continuously differentiable in $y$ and $s$,  periodic in $s$ with period $T>0$, divergence free, and satisfies
\begin{align*}
& 	\partial_s U_0-\Delta U_0-U_0-y\cdot \nabla U_0 = 0, 
\\& U_0\in L^\infty (0,T;L^4\cap L^q(\R^3)),
\\& \partial_s U_0\in L^\infty(0,T;L_{loc}^{6/5}(\R^3)),
\end{align*}
and
\[
\sup_{s\in [0,T]}\|U_0  \|_{L^q(\R^3\setminus B_R)}\leq \Theta(R),
\]
for some $q\in (3,\infty]$ and $\Theta:[0,\infty)\to [0,\infty)$ such that $\Theta(R)\to 0$ as $R\to\infty$.
\end{assumption}

Note that membership in $C^1$ guarantees that $\partial_s U_0\in L^\infty(0,T;L_{loc}^{6/5}(\R^3))$ and we only mention this inclusion explicitly since later estimates will depend on the quantity $\norm{U_0}_{L^\infty(0,T;L_{loc}^{6/5}(\R^3))}$.

For a given $W(y,s)$ and any $\zeta \in C^1_0(\R^3)$, let
\begin{equation}\label{LW.def}\notag
LW = 	\partial_s W-\Delta W-W-y\cdot \nabla W ,
\end{equation}
and
\begin{equation}
\bka{LW,\zeta} =(\partial_s W-W-y\cdot \nabla W,\zeta) + (\nabla W, \nabla \zeta).
\end{equation}

Periodic weak solutions to \eqref{eq:wholeSpaceLeray} %
are defined as follows.

\begin{definition}[Periodic weak solution]
\label{def:periodicweaksolutionR3} 
Let $U_0$ satisfy Assumption \ref{AU_0}. 
The field $u$ is a periodic weak solution to \eqref{eq:wholeSpaceLeray} if it is divergence free, if 
\begin{equation}\notag
U:= 
u-U_0\in L^\infty(0,T;L^2(\R^3))\cap L^2(0,T;H^1(\R^3)),
 \end{equation} 
and if
\begin{equation}\label{u.eq-weak}
\int_0^T \big( (u,\partial_s f)-(\nabla u,\nabla f)+(u+y\cdot\nabla u-u\cdot\nabla u ,f)  \big)  \,ds =0,
\end{equation} 
holds for all $f \in \mathcal D_T$.  
This latter condition implies that $u(0)=u(T)$.

\end{definition}

If $u$ satisfies this definition then there exists a pressure $p$ so that $(u,p)$ constitute a distributional solution to \eqref{eq:wholeSpaceLeray} (see \cite{Temam}; we will provide more details in our proof).  In the SS variables our notion of suitability mirrors that in the physical variables.
\begin{definition} [Suitable periodic weak solution]
Let $U_0$ satisfy Assumption \ref{AU_0}. 
A pair $(u,p)$ is a \emph{suitable periodic weak solution} to \eqref{eq:wholeSpaceLeray} 
on $\R^3$ if both are time periodic with period $T$,
 $u$ is a periodic weak solution on $\R^3$, $p\in L^{3/2}_{loc}(\R^4)$, the pair $(u,p)$ solves \eqref{eq:wholeSpaceLeray} in the sense of distributions, and the local energy inequality holds:%
\begin{align}\label{ineq:localEnergy}
 \int_{\R^4}\bigg( \frac12 {|  u|^2}   +|\nabla   u|^2 \bigg)\psi\,dy\,ds &\leq \int_{\R^4} \frac {|   u|^2} 2 \big(\partial_s \psi +\Delta \psi   \big)\,dy\,ds
\\&\quad +  \int_{\R^4} \bigg( \frac 1 2  |   u|^2 ((  u- y)\cdot \nabla \psi ) +   p (  u\cdot \nabla \psi) \bigg)\,dy\,ds,\notag 
\end{align}
for all nonnegative $\psi \in C_0^\infty(\R^4)$.
\end{definition}

The main result of this section concerns the existence of suitable periodic weak solutions. 

\begin{theorem}[Existence of suitable periodic weak solutions to \eqref{eq:wholeSpaceLeray}]\label{thrm:existenceOnR3}
Assume $U_0(y,s)$ satisfies Assumption \ref{AU_0} with $q=10/3$. Then \eqref{eq:wholeSpaceLeray} has a periodic suitable weak solution $(u,p)$ in $\R^4$ with period $T$.
\end{theorem}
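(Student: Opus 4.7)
The plan is to set $U := u - U_0$ and produce a divergence-free, $T$-periodic $U \in L^\infty(0,T;L^2(\R^3))\cap L^2(0,T;H^1(\R^3))$ satisfying the difference equation
\EQN{
\pd_s U - \Delta U - U - y\cdot \nb U + (U+U_0)\cdot \nb(U+U_0) + \nb p = 0, \qquad \nb\cdot U = 0,
}
obtained by subtracting $LU_0 = 0$ from the full Leray system. Approximate solutions would be built by a Galerkin scheme on invading balls $B_{R'}$, using an orthonormal basis of divergence-free $H_0^1(B_{R'})$ vector fields (e.g.\ Stokes eigenfunctions); in finite dimensions, $T$-periodic orbits are produced by Brouwer's fixed-point theorem applied to the Poincar\'e return map $c(0)\mapsto c(T)$, whose invariance of a sufficiently large ball follows from the a priori bound below. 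To make the estimate uniform in the approximation, I would replace $U_0$ by a \emph{modified} background $U_0^\sharp := U_0(1-\phi_R) + B_R$, where $\phi_R$ is a radial cutoff vanishing on $B_{R/2}$ and equal to $1$ outside $B_R$, and $B_R$ is a Bogovskii-type correction restoring $\nb \cdot U_0^\sharp = 0$ and of size $\lec R^{-1}\|U_0\|_4$; writing $u = U_0^\sharp + W$ with $u - U_0 = W + (U_0^\sharp - U_0)$ and the second summand small and compactly supported, it suffices to produce $W$.

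The heart of the argument is the energy identity obtained by testing the equation for $W$ against $W$ and using the cancellations $\int (W\cdot\nb)W\cdot W = 0$, $\int (U_0^\sharp\cdot\nb)W\cdot W = 0$, and $\int (y\cdot\nb)W\cdot W = -\tfrac{3}{2}\|W\|_2^2$, which give
\EQN{
\tfrac{d}{ds}\tfrac12\|W\|_2^2 + \|\nb W\|_2^2 + \tfrac12\|W\|_2^2 = \int (W\cdot \nb)W\cdot U_0^\sharp - \bka{\mathcal R(U_0^\sharp) + LU_0^\sharp,\, W}.
}
The source terms are controlled by the compact support of $U_0^\sharp - U_0$ together with $\|U_0\|_4$, $\|\pd_s U_0\|_{L^{6/5}_{\mathrm{loc}}}$, and the $C^1$-regularity of $U_0$ from Assumption \ref{AU_0}. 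The trouble term is dominated via H\"older ($q=10/3$) and the Gagliardo-Nirenberg interpolation $\|W\|_5 \lec \|W\|_2^{1/10}\|\nb W\|_2^{9/10}$ by
\EQN{
\Big|\int (W\cdot \nb)W\cdot U_0^\sharp\Big| \le C\Theta(R)\|W\|_2^{1/10}\|\nb W\|_2^{19/10} \le \epsilon \|\nb W\|_2^2 + C_\epsilon \Theta(R)^{20}\|W\|_2^2,
}
which for $R$ large and $\epsilon$ small absorbs into the coercive left side, crucially using \emph{both} $\|\nb W\|_2^2$ and the extra $\tfrac12\|W\|_2^2$. Integrating over $[0,T]$ and using periodicity to eliminate the time derivative yields a uniform bound $\|W\|_{L^\infty_s L^2_y} + \|W\|_{L^2_s H^1_y} \le C(U_0)$. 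A dual bound on $\pd_s W$ read off the equation then gives strong $L^2(0,T;L^2_{\mathrm{loc}})$ convergence via Aubin-Lions, enough to pass to the limit in the nonlinearity; a diagonal extraction in the Galerkin index and $R'\to \I$ produces a periodic weak solution $u$ on $\R^3 \times \R$. The decay condition in \eqref{eq:wholeSpaceLeray} follows from $u - U_0 \in L^\infty_s L^2_y$, the pressure $p$ is recovered via de Rham's theorem applied to the distributional equation (giving $p\in L^{3/2}_{\mathrm{loc}}$), and the local energy inequality \eqref{ineq:localEnergy} is inherited from exact energy equalities for the smooth Galerkin approximations via weak lower semi-continuity of $\int |\nb u|^2\psi$ against a nonnegative test function.

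The main obstacle is the coupling term $\int (U\cdot\nb)U\cdot U_0$: for Navier-Stokes only $\|\nb U\|_2^2$ is available on the left, so a large $L^q$-norm of $U_0$ cannot be absorbed. The argument closes here because (i) the Leray drift $-U - y\cdot \nb U$ contributes an extra coercive $\tfrac12\|U\|_2^2$ on the left, and (ii) the spatial decay of $U_0$ at infinity (encoded in $\Theta(R)\to 0$) lets one peel off the local part of $U_0$ via a cutoff so that the effective background in the trouble term is arbitrarily small. Keeping the residual compactly-supported corrections (the Bogovskii piece and $LU_0^\sharp$) under control without disturbing periodicity is the delicate bookkeeping of the argument.
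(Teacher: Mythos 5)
Your overall skeleton---peel off the near-origin part of $U_0$ with a cutoff plus a divergence-restoring correction so that the effective background is small in $L^{10/3}$, run a Galerkin scheme whose $T$-periodic orbits come from Brouwer's theorem applied to the return map, and close the energy estimate by absorbing $\int (W\cdot\nabla)W\cdot U_0^\sharp$ into the left side using \emph{both} $\|\nabla W\|_{L^2}^2$ and the extra coercive $\tfrac12\|W\|_{L^2}^2$ supplied by the drift $-U-y\cdot\nabla U$---is exactly the paper's strategy (Lemmas \ref{lemma:W} and \ref{lemma:Galerkin}), and your H\"older/Gagliardo--Nirenberg treatment of the trouble term is a correct variant of the paper's absorption. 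Two slips should be fixed: as literally written, $1-\phi_R$ is supported in $B_R$, so $U_0^\sharp$ is the \emph{near-field} part of $U_0$, $U_0^\sharp-U_0$ is not compactly supported, and the bound $\|U_0^\sharp\|_{L^{10/3}}\lesssim\Theta(R)$ fails; you clearly intend the opposite cutoff. Also $\mathcal R$ already contains $L$, so the source term should be $\mathcal R(U_0^\sharp)$ alone, not $\mathcal R(U_0^\sharp)+LU_0^\sharp$.

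The genuine gap is suitability. A Galerkin approximation satisfies a \emph{global} energy equality (test with $U_k$, which lies in the span), but it does not satisfy a \emph{local} energy equality: the localized field $\psi U_k$ is neither divergence free nor in $\mathrm{span}(a_1,\dots,a_k)$, so testing the projected equation with it produces an uncontrolled projection error. This is precisely why suitability is not known for general Galerkin--Leray--Hopf solutions, and why the paper inserts an extra layer: it mollifies the convection term, i.e.\ works with $(W+\eta_\e * U)\cdot\nabla U$ in \eqref{eq:mollifiedLeray}, constructs for each $\e$ an actual weak solution $U_\e$ of the mollified system for which the local energy \emph{equality} can be verified, and only then sends $\e\to0$, the inequality \eqref{ineq:localEnergy} surviving by lower semicontinuity of $\iint|\nabla u|^2\psi$ as in the appendix of \cite{CKN}. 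Your proposal omits this device, so the limit you produce is only a periodic weak solution, not a suitable one. Relatedly, invoking de Rham to ``recover $p\in L^{3/2}_{loc}$'' does not give bounds uniform in the approximation parameter, which you need both to make sense of and to pass to the limit in the term $\iint p\,(u\cdot\nabla\psi)$; the paper identifies $\nabla p_\e$ with the Riesz-transform pressure via a uniqueness argument for the time-dependent Stokes system in the physical variables and deduces a uniform $L^{5/3}_{x,t}$ bound---this is where the hypothesis $q=10/3$ is actually used.
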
  

We need $3<q\le \frac{18}5$ to show $p \in L^{3/2}_{x,t,loc}$, and it is convenient to take $q=10/3$.

Ideally we would prove the existence of a divergence free time-periodic vector field $U$ where $u=U+U_0$ and $U$ satisfies a perturbed version of \eqref{eq:wholeSpaceLeray}. In view of \eqref{eq1.14}, doing so would require the constant from the pointwise bound on $U_0(y,s)$ be small to ensure that
\begin{equation}
   \int (f\cdot\nabla f)\cdot U_0 \,dy \leq \alpha ||f||_{H_0^1(\R^3)}^2,
\end{equation}
for any $f\in H_0^1(\R^3)$ and a small constant $\alpha$.  To get around this issue we replace $U_0$ by a perturbation $W$ which eliminates the possibly large behavior of $U_0$ near the origin. Fix $Z\in C^\infty(\R^3)$ with $0 \le Z \le 1$, $Z(x)=1$ for $|x|>2$ and $Z(x)=0$ for $|x|<1$. This can be done so that $|\nb Z|+|\nb^2 Z| \lec 1$.
 For a given $R>0$, let $\xi_{R}(y)=Z(\frac yR)$. It follows that $|\nabla^k \xi_R|\lesssim R^{-k}$ for $k\in \{ 0,1\}$.

\begin{lemma}[Revised asymptotic profile]
\label{lemma:W}
Fix $q\in (3,\infty]$ and suppose $U_0$ satisfies Assumption \ref{AU_0} for this $q$. 
Let $Z\in C^\infty(\R^3)$ be as above.
For any $\alpha\in (0,1)$, there exists $R_0=R_0(U_0,\alpha)\ge 1$ so that letting $\xi(y) =Z(\frac y{R_0})$ and setting
\begin{equation}
  W (y,s)= \xi(y) U_0(y,s) + w(y,s),
\end{equation}
where 
\begin{equation}
w(y,s)=\int_{\R^3}\nabla_y \frac 1 {4\pi |y-z|} \nabla_z \xi(z) \cdot U_0 (z,s) \,dz,
\end{equation}
we have that $W$ is locally continuously differentiable in $y$ and $s$, $T$-periodic, divergence free,
 $U_0 - W \in L^\infty(0,T; L^2(\R^3))$, and 
\begin{equation}\label{ineq:Wsmall}
\|W\|_{L^\infty(0,T;L^q(\R^3))}\leq \alpha, %
\end{equation} 
\begin{equation}\label{WL4.est}
\norm{W}_{L^\infty(0,T;L^4(\R^3))}\leq c(R_0,U_0),
\end{equation}
and
\begin{equation}
\label{LW.est}
\norm{LW}_{L^\infty(0,T; H^{-1}(\R^3))} \leq c(R_0,U_0), %
\end{equation}
where $LW$ is given in \eqref{LW.def}, $c(R_0,U_0)$ depends on $R_0$ and quantities associated with $U_0$ which are finite by Assumption \ref{AU_0}.
\end{lemma}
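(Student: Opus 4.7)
The plan is to define $W := \xi U_0 + w$, with the Bogovskii-type correction $w$ chosen to cancel the divergence introduced by cutting off $U_0$. Setting $g := \nabla\xi\cdot U_0$, one recognizes $w(y,s) = \nabla_y \Psi(y,s)$ where $\Psi(y,s) = \int_{\R^3} (4\pi|y-z|)^{-1} g(z,s)\,dz$ is the Newtonian potential of $g$; hence $-\Delta \Psi = g$ and $\nabla\cdot w = \Delta\Psi = -g$. Combined with $\nabla\cdot(\xi U_0) = g$ (using $\nabla\cdot U_0 = 0$), this gives $\nabla\cdot W = 0$. Periodicity in $s$ and $C^1$ regularity in $(y,s)$ are inherited from $U_0$ via the convolution representation for $w$. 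Since $g$ is compactly supported in the annulus $A_{R_0} := \{R_0\le |y|\le 2R_0\}$, standard multipole expansion gives $|w(y)|\lec |y|^{-2}$ and $|\nabla w(y)|\lec |y|^{-3}$ as $|y|\to\infty$; I will use these decay rates repeatedly.

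The three quantitative bounds hinge on $\|\nabla\xi\|_{L^\infty}\lec R_0^{-1}$ and $|A_{R_0}|\sim R_0^3$. For \eqref{ineq:Wsmall} the Hardy--Littlewood--Sobolev inequality gives $\|w\|_{L^q}\lec \|g\|_{L^p}$ with $1/p = 1/q + 1/3$, and H\"older on $A_{R_0}$ gives
\[
\|g\|_{L^p}\lec R_0^{-1}|A_{R_0}|^{1/p-1/q}\|U_0\|_{L^q(\{|y|\ge R_0\})}\lec \Theta(R_0),
\]
since $3(1/p - 1/q) = 1$ makes the $R_0$ factors cancel. Together with the trivial bound $\|\xi U_0\|_{L^q}\le \Theta(R_0)$, choosing $R_0$ large (possible since $\Theta\to 0$) yields \eqref{ineq:Wsmall}. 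The identical computation with $L^4$ and the global $\|U_0\|_{L^4}$ replacing $L^q$ and $\Theta$ gives \eqref{WL4.est}. For $U_0 - W = (1-\xi)U_0 - w$, the first summand is compactly supported and in $L^2$ by H\"older and $\|U_0\|_{L^4}$, while HLS with $\tfrac12 = \tfrac56 - \tfrac13$ gives $\|w\|_{L^2}\lec \|g\|_{L^{6/5}}<\infty$.

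For \eqref{LW.est}, using $LU_0 = 0$ gives the pointwise identity
\[
L(\xi U_0) = -2\nabla\xi\cdot\nabla U_0 - (\Delta \xi + y\cdot\nabla \xi)\, U_0,
\]
supported in $A_{R_0}$. The term involving $\nabla U_0$ is not pointwise controlled by Assumption \ref{AU_0}, but is handled distributionally: for $\phi\in H_0^1(\R^3)$, integration by parts gives
\[
\int (\nabla\xi\cdot\nabla U_0)\cdot\phi\,dy = -\int (\Delta\xi)(U_0\cdot\phi)\,dy - \sum_i\int (\partial_i\xi)(U_0\cdot\partial_i\phi)\,dy,
\]
each estimable by $c(R_0,U_0)\|\phi\|_{H^1}$ via H\"older and $\|U_0\|_{L^4}$. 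The remaining contribution $(\Delta\xi + y\cdot\nabla\xi)U_0$ lies in $L^2(A_{R_0})$ uniformly. For $Lw$, the identity $-\Delta w = \nabla g$ gives
\[
Lw = \partial_s w + \nabla g - w - y\cdot\nabla w,
\]
and each summand lies in $L^\infty(0,T; L^2(\R^3))$ or is a distributional gradient of such: the decay of $w$ places $w,\, y\cdot\nabla w\in L^2$; $g\in L^2$ gives $\nabla g\in H^{-1}$; and $\partial_s w\in L^2$ follows from the Riesz potential bound $\|\nabla(-\Delta)^{-1}h\|_{L^2}\lec \|h\|_{L^{6/5}}$ applied to the compactly supported $h = \nabla\xi\cdot \partial_s U_0$, using precisely the hypothesis $\partial_s U_0\in L^\infty(0,T; L^{6/5}_{loc})$.

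The main obstacle is the exponent balance in \eqref{ineq:Wsmall}: the identity $3(1/p-1/q) = 1$ is what allows the $R_0^{-1}$ loss from $\|\nabla \xi\|_{L^\infty}$ to be exactly cancelled by the $R_0^{3/p - 3/q}$ gain from the volume of $A_{R_0}$, leaving the pure smallness factor $\Theta(R_0)$. Without this cancellation the smallness required for $\|W\|_{L^q}\le \alpha$ could not be extracted. Handling the $\nabla U_0$ term in $L(\xi U_0)$ distributionally is a secondary subtlety that avoids appealing to an unavailable pointwise bound on $\nabla U_0$.
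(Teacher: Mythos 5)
Your proof is correct and follows essentially the same route as the paper: the same Newtonian-potential correction $w=\nabla(-\Delta)^{-1}(\nabla\xi\cdot U_0)$ to restore $\div W=0$, the same use of $LU_0=0$ to reduce $L(\xi U_0)$ to commutator terms supported on the annulus, and the same $L^{6/5}_{loc}$-based $L^2$ bound on $\partial_s w$. The only minor differences are that the paper gets $\|w\|_{L^q}\lesssim \Theta(R_0)$ by viewing $w$ as a double Riesz transform of $\xi U_0$ (Calder\'on--Zygmund) rather than by your HLS-plus-H\"older exponent count, and it bounds the $\nabla\xi\cdot\nabla U_0$ term pointwise using $U_0\in C^1$ rather than by your integration by parts; both variants are fine, though at the endpoint $q=\infty$ your HLS step (like the paper's CZ step) should be replaced by the direct kernel estimate on the compactly supported $\nabla\xi\cdot U_0$.
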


{\it Remark.} The correction term $w$, introduced to make $\div W=0$,  usually has compact support, see \cite[III.3]{Galdi}. Similar non-compact corrections have also been used, e.g.~in \cite{KMT2012, LuoTsai}.

\begin{proof}
We will typically suppress the $s$ dependence.
Since $U_0$ is divergence free and $w = \nabla (-\Delta)^{-1}  ( \nabla \xi \cdot U_0)$, we have $\div W = \nabla \xi \cdot U_0 + \div w = 0$. 

We first prove the bound \eqref{ineq:Wsmall}. Since $U_0$ is divergence free we obtain using the integral formula for $w$ and the Calderon-Zygmund theory that
\[
\|w\|_{L^q(\R^3)}\leq c_q \|\xi U_0  \|_{L^q(\R^3)}\leq c_q\Theta(R_0),
\]
where $\Theta$ is given by Assumption \ref{AU_0} and $c_q$ depends on $q$.  Then, assuming $R_0$ is large enough that $\Theta(R_0)\leq \alpha (1+c_q)^{-1}$, it follows that
\[
\|W\|_{L^q(\R^3)}\leq (1+c_q)\Theta(R_0)\leq \alpha,
\]
which proves \eqref{ineq:Wsmall}.

The second inequality \eqref{WL4.est} follows immediately from the Calderon-Zygmund theory and Assumption \ref{AU_0}.

Estimates for the third inequality \eqref{LW.est} are more involved. Note that $LW=L(\xi U_0)+Lw$.  Using the definition of $w$ we have
\[ \partial_s w(y,s)=\nabla_y\int_{\R^3} \frac 1 {4\pi |y-z|} \nabla_z \xi(z) \cdot \partial_s U_0 (z,s) \,dz,\]
and the Hardy-Littlewood-Sobolev inequality implies that 
\EQ{\label{pdsw.est}
\|\partial_s w\|_{L^2(\R^3)}&= \bigg\| \nabla_y \int_{\R^3} \frac 1 {4\pi |y-z|} \nabla_z \xi(z) \cdot \partial_s U_0 (z,s) \,dz \bigg\|_{L^2(\R^3)}
\\& \leq c \|\nabla \xi \cdot \partial_s U_0\|_{L^{6/5}(\R^3)},
}
which is finite by Assumption \ref{AU_0}.
We have also that
\begin{equation}
\label{eq2.13}
| w(y)|\lesssim  \int _{R_0 < |z|<2R_0} \frac1{|y-z|^2} \frac {|U_0(z)|} {R_0} dz \lec
\left \{
\begin{split} 
& R_0^{-3/4}\|U_0 \|_{L^4(\R^3)} \quad &\text{if } |y|\le 4R_0
\\
&  |y|^{-2}R_0^{5/4}\|U_0 \|_{L^4(\R^3)} \quad &\text{if } |y|>4R_0
\end{split}
\right . .
\end{equation}
These estimates are independent of time and therefore 
\EQ{ \label{w.est}
\| w\|_{L^\infty(0,T;L^2(\R^3))}\leq C(R_0,U_0).
}

We next show
\begin{equation}
\label{ineq:wgradient}|\nabla w(y)|\leq \frac {C(R_0,U_0)} {1+|y|^{3}},
\end{equation}which will allow us to conclude our estimate for $\|Lw \|_{L^2}$.
If $|y|\leq 4R_0$ we have
\begin{equation}
\nabla w(y) =\int_{\R^3}\nabla_y \frac 1 {4\pi |y-z|} \nabla_z(\nabla_z \xi(z) \cdot U_0 (z)) \,dz,
\end{equation}
and, since $U_0$ is continuously differentiable we have $||\nabla U_0||_{L^\infty (B_{2R_0})}<\infty $ and thus
\begin{equation}
|| \nabla w(y)||_{L^\infty (B_{4R_0})}\leq C(R_0,U_0),
\end{equation}
where $c_1$ depends on $R_0$ and $||\nabla U_0||_{L^\infty (B_{2R_0})}$.
If $|y|\geq 4R_0$ then
\begin{equation}
\nabla w(y) = \int \nabla_z \nabla_y \frac 1{4\pi |z-y|}  (\nabla \xi \cdot U_0)(z)\,dz,
\end{equation}
 and it follows that
\begin{equation}
|\nabla w(y)|\leq \frac {c R_0^{5/4} \|U_0\|_{L^4(\R^3)}} {|y|^3}.
\end{equation}
Thus we have \eqref{ineq:wgradient}.
The estimates
\eqref{pdsw.est}, \eqref{w.est}, and \eqref{ineq:wgradient} show that $Lw\in L^\infty(0,T;H^{-1}(\R^3))$.

We now focus on $L(\xi U_0)$.  Note that, because $LU_0=0$ by Assumption \ref{AU_0},
\EQ{
L(\xi U_0) &= \xi L U_0 + W_2 = W_2,}
where \EQ{
W_2&=-(\Delta \xi) U_0 -2(\nabla \xi \cdot \nabla) U_0 -(y\cdot \nabla \xi) U_0.
}
Since  both $U_0$ and $\nabla U_0$ belong to $L^\infty_{loc}(\R^3\times \R)$ and $\nabla \xi$ is compactly supported, 
\[
\sup_{0\leq s\leq T}\norm{W_2(\cdot,s)}_{L^1\cap L^\infty(\R^3)} \le C(R_0,c_0).
\]

The above estimates show \eqref{LW.est} and complete the proof.
\end{proof}



To solve for $u$, we will decompose $u=W+ U$, where $W$ is as in Lemma \ref{lemma:W} for $\alpha = 1/4$, and hence we can drop the $R_0$ dependence in $C(U_0)$.
Note $U$ satisfies a perturbed Leray system, namely
\begin{equation} \label{perturbed-Leray}
L U + (W+U)\cdot \nabla U + U\cdot \nabla W +\nabla p= -  \mathcal R(W), \quad \div U=0,
\end{equation} 
where the source term is
\begin{equation}\label{RW.def}
\mathcal{R}(W) := 	\partial_s W-\Delta W-W-y\cdot \nabla W + W\cdot\nabla W.
\end{equation}

To obtain suitable weak solutions (as opposed to just weak solutions) to \eqref{eq:wholeSpaceLeray}, we first construct smooth solutions to the mollified version of \eqref{perturbed-Leray}, see e.g.~discussions in \cite{BCI}.  For all $\epsilon>0$, let $\eta_\epsilon(y)=\epsilon^{-3}\eta(y/\epsilon)$ for some $\eta\in C_0^\infty$ satisfying $\int_{\R^3}\eta\,dy=1$. 
We seek a solution of the form $u_\e=U_\e+W$ where $U_\e$ is %
$T$-periodic, decays faster than $W$ at spatial infinity, and satisfies the {\em mollified perturbed Leray equations} for $U=U_\e$ and $p=p_\e$,
\begin{align}\label{eq:mollifiedLeray}
L U + (W+\eta_\e* U)\cdot \nabla U + U\cdot \nabla W +\nabla p= -  \mathcal R(W), \quad \div U=0,
\end{align}
on $\R^3\times [0,T]$. 
 The weak formulation of \eqref{eq:mollifiedLeray} is 
\begin{align}\label{eq:boundedWeakForm}
\frac d {ds}(U,f)
&=
		- (\nabla U,\nabla f) 
		+ (U+y\cdot \nabla U, f)
		- ((\eta_\epsilon *U) \cdot\nabla U, f)
\\\notag&\quad
		-(W\cdot\nabla U+U\cdot \nabla W,f)-\langle \mathcal{R}(W),f\rangle,
\end{align}
and holds for all $f \in \mathcal D_T$ and a.e.~$s\in (0,T)$.

We use the Galerkin method following \cite{GS06} (see also $\cite{Morimoto,Temam}$).
Let $\{a_{k}\}_{k\in \N}\subset \mathcal V$ be an orthonormal basis of $H$.
For a fixed $k$, we look for an approximation solution of the form $U_k(y,s)= \sum_{i=1}^k b_{ki}(s)a_i(y)$.
We first prove the existence of and \emph{a priori} bounds for $T$-periodic solutions $b_k=(b_{k1},\ldots,b_{kk})$ to the system of ODEs
\begin{align}\label{eq:ODE}
\frac d {ds} b_{kj} = & \sum_{i=1}^k A_{ij}b_{ki} +\sum_{i,l=1}^k B_{ilj} b_{ki}b_{kl} +C_j,%
\end{align}
for $j\in \{1,\ldots,k\}$,
where
\begin{align}
\notag A_{ij}&=- (\nabla a_{i},\nabla a_j) 
		+ (a_i+y\cdot \nabla a_i, a_j) 
		 -(    a_i\cdot \nabla W,a_j)
		- (W\cdot\nabla a_i, a_j)
\\\notag B_{ilj}&=- (\eta_\epsilon *a_i \cdot\nabla a_l, a_j)
\\\notag C_j&=-\langle \mathcal R (W),a_j\rangle.
\end{align}
 
\begin{lemma}[Construction of Galerkin approximations]\label{lemma:Galerkin} Fix $T>0$ and let $W$ satisfy the conclusions of Lemma \ref{lemma:W} with $\alpha=\frac 14$.
\begin{enumerate}
\item For any $k\in \mathbb N$ and $\epsilon>0$, the system of ODEs \eqref{eq:ODE} has a $T$-periodic solution $b_{k}\in H^1(0,T)$.
\item Letting
\begin{equation} \notag
U_k(y,s)=\sum_{i=1}^k b_{ki}(s)a_i(y),
\end{equation}we have 
\begin{equation}\label{ineq:uniformink}
||U_k||_{L^\infty (0,T;L^2(\R^3))} + ||U_k||_{L^2(0,T;H^1(\R^3))}<C,
\end{equation}where $C$ is independent of both $\epsilon$ and $k$.
\end{enumerate}
\end{lemma}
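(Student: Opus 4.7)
The plan is to treat the two parts of the lemma together, deriving a single a priori energy estimate that (a) produces the invariant ball needed for a Brouwer-type argument on the Poincar\'e map of the ODE system \eqref{eq:ODE} and (b) yields the uniform $k,\epsilon$-independent bounds in \eqref{ineq:uniformink}. The right-hand side of \eqref{eq:ODE} is a polynomial (quadratic) in $b_k$ with coefficients continuous and $T$-periodic in $s$, so the initial value problem is locally well posed; the only issue is finding a fixed point of the Poincar\'e map $\Phi: b_k(0)\mapsto b_k(T)$. Brouwer's theorem will apply as soon as I exhibit a ball in $\mathbb R^k$ that $\Phi$ maps into itself, which is exactly what the energy estimate will provide.

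For the energy estimate, I test the Galerkin equation \eqref{eq:boundedWeakForm} against $f=U_k$ (legitimate since $U_k$ lies in the span of the $a_i$). Using divergence-freeness of $U_k$, $\eta_\epsilon * U_k$, and $W$, the two convection terms $((\eta_\epsilon*U_k)\cdot\nb U_k,U_k)$ and $(W\cdot\nb U_k,U_k)$ vanish, and $(y\cdot\nb U_k,U_k)=-\frac32\|U_k\|_2^2$. The one bad term, $(U_k\cdot\nb W,U_k)$, I integrate by parts to $-(U_k\cdot\nb U_k, W)$ and estimate by H\"older with exponents $(2q/(q-2),2,q)$, Gagliardo--Nirenberg
\[
\|U_k\|_{L^{2q/(q-2)}}\lec \|U_k\|_{L^2}^{1-\theta}\|\nb U_k\|_{L^2}^{\theta},\qquad \theta=3/q<1 \text{ since }q>3,
\]
Lemma~\ref{lemma:W}'s bound $\|W\|_{L^q}\le \alpha=\tfrac14$, and Young's inequality to absorb a small multiple of $\|\nb U_k\|_2^2$ and produce a harmless multiple of $\|U_k\|_2^2$. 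The source term is handled by duality $|\bka{\mathcal R(W),U_k}|\le \|\mathcal R(W)\|_{H^{-1}}\|U_k\|_{H^1}$, where $\mathcal R(W)\in L^\I(0,T;H^{-1})$ follows from \eqref{LW.est} together with $W\otimes W\in L^\I(0,T;L^2)$ by \eqref{WL4.est}. Collecting everything yields
\EQN{
\frac{d}{ds}\|U_k\|_2^2 + c_1\|\nb U_k\|_2^2 + c_2\|U_k\|_2^2 \le C(W),
}
with $c_1,c_2>0$ and $C(W)$ independent of both $k$ and $\epsilon$.

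From this differential inequality, Gronwall gives $\|U_k(T)\|_2^2\le e^{-c_2 T}\|U_k(0)\|_2^2+C(W)/c_2$, so $\Phi$ maps the closed ball $\{\|b_k(0)\|^2\le M\}$ (with the Euclidean norm equal to $\|U_k(0)\|_{L^2}^2$ by orthonormality of $\{a_i\}$) into itself once $M$ is chosen larger than $C(W)/(c_2(1-e^{-c_2 T}))$; continuity of $\Phi$ in initial data together with Brouwer then yields a fixed point, proving part (1) and also providing the $H^1$ regularity in $s$ via the ODE. For part (2), the same Gronwall argument applied to a periodic solution forces $\|U_k(0)\|_2^2\le C(W)/(c_2(1-e^{-c_2 T}))$, so $\|U_k(s)\|_2^2$ is bounded uniformly on $[0,T]$, and integrating the differential inequality over one period (where the $\frac{d}{ds}\|U_k\|^2$ term cancels by periodicity) bounds $\int_0^T\|\nb U_k\|_2^2\,ds$. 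All constants depend only on $T$, $\|W\|_{L^\infty_s L^q}$, $\|W\|_{L^\infty_s L^4}$, and $\|\mathcal R(W)\|_{L^\infty_s H^{-1}}$, none of which sees $k$ or $\epsilon$. The only real obstacle is the $(U_k\cdot\nb W,U_k)$ term, and the reason the argument succeeds is the combination of the smallness $\alpha=\tfrac14$ from Lemma~\ref{lemma:W} and the strict inequality $q>3$, which forces $\theta<1$ and thereby allows absorption rather than just domination.
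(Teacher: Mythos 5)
Your proposal is correct and follows essentially the same route as the paper: test the Galerkin system with $U_k$ so the cubic convection terms cancel and the Leray damping term $\tfrac12\|U_k\|_{L^2}^2$ appears on the left, absorb $(U_k\cdot\nb W,U_k)$ using the smallness $\|W\|_{L^q}\le\tfrac14$ from Lemma \ref{lemma:W}, bound the source by duality via $\mathcal R(W)=LW+\div(W\otimes W)$, and then use Gronwall to get a $k$- and $\epsilon$-independent invariant ball for the Poincar\'e map, Brouwer for periodicity, and integration over one period for the $L^2H^1$ bound. The only difference is cosmetic: you spell out the Gagliardo--Nirenberg/Young absorption of the cross term that the paper merely asserts as $\tfrac18\|U_k\|_{H^1}^2$.
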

\begin{proof}
Our argument is standard (see \cite{GS06,Temam}).
Fix $k\in \N$. For any  $U^{0}\in \operatorname{span}(a_1,\ldots,a_k)$, 
there exist $b_{kj}(s)$ uniquely solving \eqref{eq:ODE} with initial value $b_{kj}(0)=(U^{0},a_j)$, and belonging to $H^1(0,\tilde T)$ for some time $0<\tilde T\leq T$.  If $\tilde T<T$ assume it is maximal -- i.e.~$||b_{k}(s)||_{L^2}\to\infty$ as $s\to \tilde T^-$.  By multiplying the $j$-th equation of \eqref{eq:ODE} by $b_{kj}$ and summing, since certain cubic terms vanish, we obtain 
\begin{equation} \label{ineq:1}
\frac 1 2 \frac d {ds} ||U_k||_{L^2}^2 + \frac 1 2 ||U_k||_{L^2}^2+ ||\nabla U_k||_{L^2}^2\leq - ( U_k\cdot \nabla W, U_k ) - \langle \mathcal{R}(W), U_k\rangle. 
\end{equation}
Note that \eqref{ineq:Wsmall} and the fact that $U_k$ is divergence free guarantee that
\begin{equation}
  \big| ( U_k\cdot \nabla W, U_k )  \big| \leq  \frac 1 8 ||U_k||_{H^1}^2 .%
\end{equation}

By $\cR(W)= LW + \div(W\otimes W)$, and \eqref{ineq:Wsmall},
\begin{equation} \label{ineq:2}
|(\cR(W),U_k)| \le (\norm{LW}_{H^{-1}} +   \|W  \|_{L^4}^2 ) \norm{U_k}_{H^1}  \le C_2+ \frac 1 8 ||U_k||_{H^1}^2 .%
\end{equation}
where $C_{2}=C(\norm{LW}_{H^{-1}} +  \|W  \|_{L^4}^2)^2 $ is independent of $s$, $T$, $k$, and $\epsilon$.

Using Lemma \ref{lemma:W}, the estimates \eqref{ineq:1}--\eqref{ineq:2} imply
\begin{equation}  \label{ineq:kenergyevolution}
	 \frac d {ds} ||U_k||_{L^2}^2
	 +   \frac 1 2 ||U_k||_{L^2}^2
	 +   \frac 1 2 ||\nabla U_k||_{L^2}^2 \leq C_{2} .
\end{equation}
The Gronwall inequality implies
\begin{equation} \label{ineq:gronwall}
\begin{split}
e^{s/2} ||U_k(s)||_{L^2}^2
&\leq ||U^{0}||_{L^2}^2 +  \int_0^{\tilde T} e^{\tau/2}  C_2 \,dt
\\
& \le  ||U^{0}||_{L^2}^2 + e^{T/2} C_2 T
\end{split}
\end{equation}
for all $s\in [0,\tilde T]$. Since the right hand side is finite, $\tilde T$ is not a blow-up time and we conclude that $\tilde T=T$.  

By \eqref{ineq:gronwall} we can choose $\rho>0$ (independent of $k$ \footnote{For usual Navier-Stokes we expect $\rho$ to depend on $k$ since we use the imbedding $H^1_0(B) \to L^2(B)$ for $B\subset \R^k$. But here for Leray system we don't need it.}) so that 
\begin{equation}\notag
 ||U^{0}||_{L^2}\leq \rho \Rightarrow ||U_{k}(T)||_{L^2}\leq \rho.
\end{equation}
Let $T: B_\rho^k\to B_\rho^k$ map $b_{k}(0)\to b_k(T)$, where $ B_\rho^k$ is the closed ball of radius $\rho$ in $\R^k$.  This map is continuous and thus has a fixed point by the Brouwer fixed-point theorem, implying there exists some $U^{0}\in \operatorname{span}(a_1,\ldots,a_k)$ so that $b_k(0)=b_k(T)$. 

It remains to check that \eqref{ineq:uniformink} holds. The $L^\infty L^2$ bound now follows from \eqref{ineq:gronwall}
since $\norm{U^0}_{L^2} \le \rho$, which is independent of $k$ and $\epsilon$.  
 Integrating  \eqref{ineq:kenergyevolution} in $s \in [0,T]$ and using $U_k(0)=U_k(T)$, we get
\begin{equation} \label{eq2.33}
 \frac 1 2 \int_0^T \big(||U_k||_{L^2}^2
+  ||\nabla U_k||_{L^2}^2 \big)\,dt \le C_2 T
\end{equation}
which gives an upper bound for $\| U_k  \|_{L^2(0,T;H^1 )}$ uniform in $k$ and $\epsilon$.
\end{proof}

We are now ready to prove Theorem \ref{thrm:existenceOnR3}.  

\begin{proof}[Proof of Theorem \ref{thrm:existenceOnR3}] 
The Galerkin approximates to the mollified system lead to a solution $U_\epsilon$ through a standard limiting process.  Indeed, under the assumptions of Theorem \ref{thrm:existenceOnR3}, standard arguments (e.g.~those in \cite{Temam}) imply that, for $T>0$ and for any $\epsilon>0$, there exists $T$-periodic $U\in {L^2(0,T;H_0^1(\R^3))}$ (with norm bounded independently of $\epsilon$) and a subsequence of $\{U_k\}$ (still denoted by $U_k$) so that 
\begin{align*}
& U_k\rightarrow U_\epsilon \mbox{~weakly in}~L^2(0,T;X),
\\& U_k\rightarrow U_\epsilon \mbox{~strongly in}~L^2(0,T;L^2(K))  \mbox{~for all compact sets~}K\subset \R^3,
\\& U_k(s)\rightarrow U_\epsilon(s) \mbox{~weakly in}~L^2 \mbox{~for all}~s\in [0,T].
\end{align*}
The weak convergence guarantees that $U_\epsilon(0)=U_\epsilon(T)$.  The limit $U_\e$ is a periodic weak solution of the mollified perturbed Leray system \eqref{eq:mollifiedLeray}.

At this stage we construct a pressure $p_\epsilon$ associated to $U_\e$ for the system \eqref{eq:mollifiedLeray}. This will allow us to obtain a suitable weak solution of  \eqref{eq:wholeSpaceLeray} when we let $\epsilon\to 0$.  Note that $p_\e$ is defined as a distribution whenever $U_\e$ is a weak solution (see \cite{Temam}), but we need to show that $p_\e \in L^{3/2}_{x,t,loc}$ with a bound uniform in $\e$. 
Note that $\div  L(W)=0$ because $W$ is divergence free and, therefore, taking the divergence of \eqref{eq:mollifiedLeray},
 \begin{equation}\label{p.eq}
 -\Delta p_\epsilon =\sum_{i,j}\pd_i \pd_j
 \bkt{(\eta_\epsilon * U _i )U_j + W_i U_j + U_i W_j + W_i W_j}.
  \end{equation}
  Let 
\begin{equation} 
\tilde p_\epsilon =\sum_{i,j}  R_i R_j \bkt{(\eta_\epsilon * U _i )U_j + W_i U_j + U_i W_j + W_i W_j},
\end{equation}
where $R_i$ denote the Riesz transforms.  It also satisfies \eqref{p.eq}.
We claim that $p_\e=\tilde p_\e$ up to an additive constant by proving that $\nabla(p_\e - \tilde p_\e)=0$.  To this end we use a well known fact about the forced, non-stationary Stokes system on  $\R^3\times [t_1,t_2]$ where $t_1<t_2$ are given points in time: if $g\in L^\infty(t_1,t_2;H^{-1}(\R^3))$ and $V_0\in L^2(\R^3)$, then there exists a unique $ V\in  C_w([t_1,t_2];L^2(\R^3))\cap L^2(t_1,t_2;H^1(\R^3))$ and unique $\nabla  \pi $ satisfying $V(x,t_1)=V_0(x)$ and
\[(\partial_t  V -\Delta  V +\nabla \pi)(x,t) = g(x,t),\qquad\div V(x,t)=0,\] for $(x,t)\in \R^3\times [t_1,t_2]$. Formulas for $ V$ and $ \pi $ can be written using the Green tensor for time dependent Stokes system.  We only need the  uniqueness and recall its proof: Assume $(\hat V,\hat \pi)$ is a second solution. Then $V-\hat V$ and $\pi-\hat \pi$ satisfy the unforced Stokes system and, testing against $V-\hat V$, we obtain 
\begin{align*}
\frac 1 2 \int |(V-\hat V)(x,t)|^2\,dx + \int_0^t\int |\nabla (V-\hat V)(x,t')|^2\,dx\,dt' \leq 0,
\end{align*}implying $V=\hat V$.  This implies also that $\nabla \pi =\nabla \hat \pi$.

For our purposes let $V(x,t)=(2t)^{-1/2}U_\epsilon(y,s)$,
$ \pi(x,t) = (2t)^{-1} p_\epsilon(y,s)$, and $g(x,t)=(g_1+g_2)(x,t)$ where
\begin{align}
& g_1(x,t)= -\frac 1 {\sqrt{2t}^3}\mathcal (LW)(y,s),
\\&  g_2(x,t)= -\frac 1 {\sqrt{2t}^3}(W\cdot \nabla U_\epsilon + U_\epsilon \cdot\nabla W +(\eta_\epsilon*U_\epsilon) \cdot \nabla U_\epsilon +W\cdot\nabla W\big)(y,s),
\end{align}
and $y=x/\sqrt{2t}$ and $s=\log(\sqrt {2t})$.
Then, 
$g \in L^\infty(1,\lambda^2;H^{-1}(\R^3))$ and $(V,\pi)$ solves the Stokes system on $\R^3\times [1,\lambda^2]$ and $V$ is in the energy class.
 We conclude that $\nabla \pi $ is unique, and is given by Riesz transforms,
 \[
 \nabla \pi = \nabla (\Delta)^{-1} \div g_2,
 \]
 noting that $g_1$ is divergence free. %
 Since taking Riesz transforms commutes with the above change of variables and letting $\tilde \pi=(2t)^{-1}\tilde p_\epsilon$, we conclude that $\nabla \pi=\nabla \tilde \pi$, and hence $\nabla (p_\e-\tilde p_\e)=0$. We may therefore replace $p_\e$ by $\tilde p_\e$,  and apply the Calderon-Zygmund theory to obtain an \emph{a priori} bound for $p_\epsilon$, namely
\begin{equation}
\|p_\epsilon \|_{L^{5/3}(\mathbb R^3\times [0,T])}\leq C\|U_\epsilon \|_{L^{10/3}(\mathbb R^3\times [0,T])} ^2 +C\| {W} \|_{L^{10/3}(\mathbb R^3\times [0,T])}  ^2,
\end{equation}
which is finite and independent of $\epsilon$ by the known properties of $U_\epsilon$ and $W$ (using $q=10/3$).  

Because $U_\epsilon$ are bounded independently of $\epsilon$ in $L^\infty (0,T;L^2(\R^3))\cap L^2(0,T;H_0^1(\R^3))$, and $U_\e$ is a weak solution of \eqref{eq:mollifiedLeray} with $W$ bounded by Lemma \ref{lemma:W}, there exists a vector field $U\in L^\infty (0,T;L^2(\R^3))\cap L^2(0,T;H_0^1(\R^3))$ and a sequence $\{ U_{\epsilon_k}\}$ of elements of $\{U_\epsilon \}$ so that
\begin{align*}
& U_{\epsilon_k} \rightarrow U \mbox{~weakly in}~L^2(0,T;X)
\\& U_{\epsilon_k}\rightarrow U \mbox{~strongly in}~L^2(0,T;H(K)) ~ \forall \mbox{~compact sets $K\subset \R^3$}
\\& U_{\epsilon_k}(s)\rightarrow U(s) \mbox{~weakly in}~L^2 \mbox{~for all}~s\in [0,T],
\end{align*}
as $\epsilon_k\to 0$.  Let $u=U+W$.  Furthermore, since $p_{\epsilon_k}$ are uniformly bounded in $L^{5/3}(\R^3\times [0,T])$ we can extract a subsequence (still denoted $p_{\epsilon_k}$) so that 
\begin{equation}
p_{\epsilon_k}\rightarrow p \mbox{~weakly in}~L^{5/3}(\R^3\times [0,T]),
\end{equation}
for some distribution $p\in L^{5/3}(\R^3\times [0,T])$ and this convergence is strong enough to ensure that $(u,p)$ solves \eqref{eq:wholeSpaceLeray} in the distributional sense.

It remains to check that the pair $(u,p)$ is suitable. This follows as in \cite[Appendix]{CKN} since the approximating solutions $(u_\e,p_\e)$ \emph{all satisfy the local energy equality}.  
\end{proof}

\section{$\lambda$-DSS initial data and the heat equation}
\label{sec3}

In this section we provide estimates for solutions to the heat equation when the initial data $v_0$ is divergence free, $\lambda$-DSS, and belongs to $L^3_w(\R^3)=L^{(3,\infty)}(\R^3)$.  Throughout this section let $V_0(x,t)=e^{t\Delta}v_0(x)$ and $U_0(y,s)= {\sqrt {2t}} (e^{t\Delta}v_0)(x)$ where $x,t,y,s$ satisfy \eqref{variables}.   

Generally, functions in $L^3_w(\R^3)$ can possess arbitrarily many singularities of order $|x|^{-1}$. This is false if the function is discretely self-similar.  In this case, the only critical singularity is at the origin; any other singularities must be subcritical. This is clarified in the following lemma.
\begin{lemma}\label{lemma:equivalence}
If  $f$ is defined in $\R^3$ and is $\lambda$-DSS for some $\lambda>1$, then  $f \in L^3_{loc}(\R^3 \setminus \{0\})$ if and only if $f \in L^{3}_w(\R^3)$. 
\end{lemma}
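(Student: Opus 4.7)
The plan is to exploit the $\lambda$-DSS scaling $f(\lambda x) = \lambda^{-1}f(x)$ to reduce everything to the reference annulus $A_0 := \{x : 1 \le |x| < \lambda\}$. Writing $\mu(\alpha) := |\{x \in \R^3 : |f(x)|>\alpha\}|$ and $\mu_0(\alpha) := |\{x \in A_0 : |f(x)|>\alpha\}|$, the change of variables $y = \lambda^{-k}x$ on each annulus $A_k := \lambda^k A_0$ gives $|\{x \in A_k : |f(x)|>\alpha\}| = \lambda^{3k}\mu_0(\lambda^k \alpha)$, so
\begin{equation*}
\alpha^3 \mu(\alpha) = \sum_{k\in\Z} (\lambda^k \alpha)^3 \mu_0(\lambda^k \alpha).
\end{equation*}
Both directions follow from this master identity by passing between the discrete sum in $k$ and a Lebesgue integral in the threshold variable, using also that DSS gives $\|f\|_{L^3(A_k)} = \|f\|_{L^3(A_0)}$, so $f \in L^3_{loc}(\R^3\setminus\{0\})$ is equivalent to $f \in L^3(A_0)$.

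For the forward direction, set $M := \int_{A_0}|f|^3$. Monotonicity of $\mu_0$ yields the pointwise bound
\begin{equation*}
\beta^3 \mu_0(\beta) \le \frac{3}{1-\lambda^{-3}}\int_{\lambda^{-1}\beta}^\beta s^2 \mu_0(s)\,ds,
\end{equation*}
since the right-hand side dominates $\mu_0(\beta)\int_{\lambda^{-1}\beta}^\beta 3s^2\,ds = (1-\lambda^{-3})\beta^3\mu_0(\beta)$. Applying this termwise to the master identity with $\beta = \lambda^k\alpha$ telescopes the disjoint intervals $(\lambda^{k-1}\alpha,\lambda^k\alpha]$ into $(0,\infty)$ and gives $\alpha^3\mu(\alpha) \le (1-\lambda^{-3})^{-1}M$ uniformly in $\alpha$, i.e.\ $f \in L^3_w(\R^3)$.

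For the reverse direction, set $N := \|f\|_{L^3_w(\R^3)}^3$ and integrate the master identity against the scale-invariant measure $d\alpha/\alpha$ over one period $\alpha \in (1,\lambda)$. By Tonelli and the substitution $s = \lambda^k\alpha$, which satisfies $ds/s = d\alpha/\alpha$ and sends $[1,\lambda]$ onto $[\lambda^k,\lambda^{k+1}]$,
\begin{equation*}
N\log\lambda \ge \int_1^\lambda \alpha^3\mu(\alpha)\,\frac{d\alpha}{\alpha} = \sum_{k\in\Z}\int_{\lambda^k}^{\lambda^{k+1}} s^2\mu_0(s)\,ds = \tfrac{1}{3}\int_{A_0}|f|^3,
\end{equation*}
so $f \in L^3(A_0)$. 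No step is technically involved; the key mechanism shared by both directions is that the multiplicatively scale-invariant measure $d\alpha/\alpha$ restricted to the single period $(1,\lambda)$ unfolds via $\alpha \mapsto \lambda^k\alpha$ to tile all of $(0,\infty)$, which is precisely what converts the discrete DSS sum into an integral over threshold values.
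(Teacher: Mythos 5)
Your proof is correct and follows essentially the same route as the paper's: the same decomposition into annuli $A_k=\lambda^k A_0$, the same distribution-function identity $\mu(\alpha)=\sum_{k\in\Z}\lambda^{3k}\mu_0(\lambda^k\alpha)$, and the same layer-cake comparison between $\int_{A_0}|f|^3$ and the $\lambda$-adic sum. The only (harmless) differences are that you keep the factor $s^2$ inside the integrals rather than bounding it by its endpoint values on each interval, and in the reverse direction you average over one multiplicative period against $d\alpha/\alpha$ instead of working at a fixed threshold $\beta$ --- both variations merely yield slightly sharper constants.
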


\begin{proof}
Let 
\begin{equation}
A_r = \bket{x \in \R^3: r \le |x| < r \lambda }.
\end{equation}
Decompose $f = \sum_{k \in \mathbb{Z}} f_k$ where $f_k(x) = f(x)$ if $x \in A_{\lambda^k }$, and $f_k(x)=0$ otherwise. Note $f_k (x) = \lambda^{-k} f_0(\lambda^{-k}x)$ since $f$ is $\lambda$-DSS.

The distribution function for $f$ is
\[
m(\si,f) = |\{ x: |f(x)| > \si \}|.
\]
Recall the identity
\[
\int |f|^p \,dx = p \int_0^\I \si^{p} m(\si,f) d \si/\si,
\]
which holds for $1\le p < \I$. For $\beta>0$, 
\EQ{\label{eq3.2}
m(\beta, f) &= \sum_{k \in \mathbb{Z}} m(\beta,f_k)
= \sum_{k \in \mathbb{Z}} m(\lambda^k \beta,f_0) \lambda^{3k},
}
where we have used the scaling property $f_k (x) = \lambda^{-k} f_0(\lambda^{-k}x)$. 
However,
\EQ{
\int_{A_1} |f_0|^p\,dx
& =  p \int_0^\I \si^{p} m(\si,f_0) d \si/\si
\\
& = \sum_{k \in \Z} p \int_{\lambda^{k-1}\beta} ^{\lambda^k \beta} \sigma^{p-1} m(\si,f_0) d \si
\\
& \ge  \sum_{k \in \Z} p \int_{\lambda^{k-1}\beta} ^{\lambda^k \beta} (\lambda^{k-1}\beta) ^{p-1} m(\lambda^{k}\beta,f_0) d \si
\\
& = \sum_{k \in \Z} p (\lambda^k \beta- \lambda^{k-1}\beta) (\lambda^{k-1}\beta) ^{p-1} m(\lambda^{k}\beta,f_0) 
\\
& = \beta^p p (\lambda-1)\lambda^{-p} \sum_{k \in \Z} \lambda^{kp} m(\lambda^{k}\beta,f_0) .
}
Thus, with the choice $p=3$ and using \eqref{eq3.2}, we get
\EQ{
m(\beta, f) &\leq \frac {\lambda^3}{\beta^3 3 (\lambda-1)} \int_{A_1} |f_0|^3\,dx.
}
Since $\beta>0$ is arbitrary, we conclude
\EQ{
\norm{f}_{L^{3}_w(\R^3)} ^3 \leq  \frac {\lambda^3}{ 3 (\lambda-1)} \int_{A_1} | f|^3\,dx.
} 

On the other hand,
\EQ{
\int_{A_1} |f_0|^p\,dx
& =  p \int_0^\I \si^{p} m(\si,f_0) d \si/\si
\\
& = \sum_{k \in \Z} p \int_{\lambda^{k}\beta}^{\lambda^{k+1} \beta} \si^{p-1} m(\si,f_0) d \si
\\
& \leq  \sum_{k \in \Z} p \int_{\lambda^{k}\beta} ^{\lambda^{k+1} \beta} (\lambda^{k+1}\beta) ^{p-1} m(\lambda^{k}\beta,f_0) d \si
\\
& = \sum_{k \in \Z} p (\lambda^{k+1} \beta- \lambda^{k}\beta) (\lambda^{k+1}\beta) ^{p-1} m(\lambda^{k}\beta,f_0) 
\\
& = \beta^p p (\lambda-1)\lambda^{p-1} \sum_{k \in \mathbb{Z}} \lambda^{kp} m(\lambda^{k}\beta,f_0) .
}
Thus, with $p=3$ and using \eqref{eq3.2}, we get
\EQ{
 \int_{A_1} |f_0|^ 3 \,dx\le  3(\lambda-1)\lambda^{2}  \beta^3  m(\beta, f) \le 3(\lambda-1)\lambda^{2} \norm{f}_{L^{3}_w(\R^3)} ^3 .
}
\end{proof}
 
Our next lemma concerns the decay at spatial infinity for times bounded away from $t=0$ of solutions to the heat equation with discretely self-similar $L^3_w$ data.
\begin{lemma}\label{lemma:V0decay}
Suppose  $v_0 \in L^3_w(\R^3 \backslash \{0\})$ is $\la$-DSS for some $\la>1$ and let $V_0$ be defined as above. Then,  
 \[
 \label{w1.decay}
 \sup_{1\leq t\leq \lambda^2}\norm{V_0(t)}_{L^{q}(|x|>R)}\leq \Theta(R),
 \]
for any $q \in (3,\I]$ and $t\in [1,\lambda^2]$ where
$\Theta:[0,\infty)\to [0,\infty)$ depends on $q$ but satisfies $\Theta(R)\to 0$ as $R\to\infty$.
\end{lemma}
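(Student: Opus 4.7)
The plan is to apply Dini's theorem on the compact time interval $[1,\lambda^2]$. Using the standard Lorentz--Young convolution estimate for the heat semigroup, one has
\[
\|V_0(t)\|_{L^q(\R^3)} \lesssim t^{-\frac{3}{2}\left(\frac{1}{3}-\frac{1}{q}\right)}\|v_0\|_{L^3_w},\qquad q\in(3,\infty],
\]
so that $V_0(t)\in L^q(\R^3)$ with norm uniformly bounded on $[1,\lambda^2]$. The analogous smoothing estimate $\|\Delta e^{t\Delta}v_0\|_{L^q}\lesssim t^{-\frac{3}{2}(\frac{1}{3}-\frac{1}{q})-1}\|v_0\|_{L^3_w}$ combined with $\partial_t V_0=\Delta V_0$ then gives a uniform Lipschitz estimate for the map $t\mapsto V_0(t)$ into $L^q(\R^3)$; in particular this map is continuous on $[1,\lambda^2]$.

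For each fixed $t\in[1,\lambda^2]$, absolute continuity of the $L^q$-integral implies $F_R(t) := \|V_0(t)\|_{L^q(|x|>R)}\to 0$ as $R\to\infty$. The family $\{F_R\}_{R>0}$ is monotonically decreasing in $R$ and continuous in $t$, and converges pointwise to $0$. Dini's theorem on the compact set $[1,\lambda^2]$ then promotes this to uniform convergence, so setting $\Theta(R):=\sup_{t\in[1,\lambda^2]}F_R(t)$ gives the desired bound. For the endpoint $q=\infty$ one first runs the above argument for some fixed $q_0\in(3,\infty)$ to obtain uniform $L^{q_0}$-tail decay, then combines it with the uniform Lipschitz bound $\|\nabla V_0(t)\|_{L^\infty}\lesssim t^{-1}\|v_0\|_{L^3_w}$ (valid on $[1,\lambda^2]$) via a standard equicontinuity argument to upgrade the tail decay from $L^{q_0}$ to $L^\infty$.

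The main technical point is verifying the time continuity in $L^q$ for data merely in the weak space $L^{3,\infty}$, which proceeds via the Lorentz version of Young's inequality for the heat kernel; once this is secured the remainder is a soft Dini argument. It is worth noting that the DSS hypothesis is not actually used in the above sketch — only $v_0\in L^3_w(\R^3)$ is needed. The DSS structure and Lemma \ref{lemma:equivalence} are useful for producing a quantitative form of $\Theta(R)$ (since for DSS data the tail is controlled by the finite quantity $\|v_0\|_{L^3(A_1)}$ via scaling on annuli), but they are not essential for the qualitative statement $\Theta(R)\to 0$.
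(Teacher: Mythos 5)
Your proof is correct, but it takes a genuinely different and softer route than the paper's. The paper argues quantitatively: it splits the heat integral over the shell $A_R=\{R\le |x|<\la R\}$ into near-field, far-field and local pieces, controls the local piece by the concentration modulus $\om(r)=\sup_{1<|x_0|<\la}\int_{B(x_0,r)}|v_0|^3\,dx$, and then uses the discrete self-similarity in an essential way: the same $\om$ controls every shell $A_{\la^k R}$ simultaneously, so taking the supremum over $k$ yields a bound on all of $\{|x|\ge R\}$, after which $\Theta$ is built explicitly as a step function and the finite-$q$ case follows by interpolating the $L^\I$ tail bound against $\norm{V_0}_{L^3_w}$. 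Your Dini argument dispenses with all of this: it needs only the Lorentz--Young smoothing $e^{t\Delta}:L^{3,\infty}\to L^q$ for $q>3$, the resulting (Lipschitz) continuity of $t\mapsto V_0(t)$ into $L^q$ on the compact interval $[1,\la^2]$, and absolute continuity of the integral; your observation that the DSS hypothesis is then irrelevant to the qualitative statement is accurate (indeed $L^3_w(\R^3\setminus\{0\})=L^3_w(\R^3)$ since a point is null, so the hypothesis already gives global membership, and Lemma \ref{lemma:equivalence} is not needed for this step). Your separate treatment of $q=\I$, combining uniform $L^{q_0}$ tail decay with the gradient bound $\norm{\nb V_0(t)}_{L^\I}\lec t^{-1}\norm{v_0}_{L^3_w}$, is also sound. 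What you give up is exactly what the paper's Remark after the lemma emphasizes: the explicit construction shows that $\Theta$ depends on $\om$ and not merely on $\norm{v_0}_{L^3_w}$, hence that no decay rate can be uniform over bounded sets of DSS data in $L^3_w$, whereas your compactness argument produces no rate at all. Since the rest of the paper (Lemma \ref{lemma:W} and Lemma \ref{th:2.1}) only ever uses the qualitative fact $\Theta(R)\to 0$, either proof suffices for the application.
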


\begin{proof} By Lemma \ref{lemma:equivalence} we have $v_0\in L^3_{loc}(\R^3\setminus \{0\})$.
Let
\[
\om(r) = \sup_{1<|x_0|<\la} \int_{B(x_0,r)} |v_0|^3\,dx.
\]
Clearly $\om(r) \to 0$ as $r \to 0$.   Let $A_R=\{ x: R\leq |x| <\lambda R \}$.  
We first establish a general estimate for $\|V_0(t)\|_{L^q(A_R)}$ which we will then sum over nested shells.  Let
\EQ{
V_0 (x,t)
&= \int_{|z|< R/2}  {(4\pi t)^{-3/2}}e^{-|x-z|^2/{2t}}v_0(z) \,dz 
\\&\quad +\int_{R/2\leq |z|< 2\lambda R} (4\pi t)^{-3/2}e^{-|x-z|^2/{2t}}v_0(z)\,dz 
\\&\quad + \int_{2\lambda R\leq |z| } (4\pi t)^{-3/2}e^{-|x-z|^2/{2t}}v_0(z)  \,dz
\\&= I_0^R(x,t)+I_1^R(x,t)+I_2^R(x,t).
} 
Fix $(x,t)\in A_R\times [1,\lambda^2]$. Then, 
\[ |I_0^R(x,t)|+|I_2^R(x,t)|\lesssim e^{-cR^2}.\]
Hence $I_0^R,\,I_2^R\in L^p (A_R)$ with 
\[
	\norm{I_0^R}_{L^p(A_R)}+\norm{I_2^R}_{L^p(A_R)}\leq ce^{-cR^2} R^{3/p},
\]
for all $1\leq p \leq \infty$.  

We further decompose $I_1^R$ as
\begin{align*}
I_1^R(x,t) &=\bket{ \int_{z \in A_R^* , |z-x| < \th R} + \int_{z \in A_R^*, |z-x| > \th R} }(4\pi t)^{-3/2}e^{-\frac {|x-z|^2}{2t}} v_0(z)dz
\\& = : I_3^R(x,t) + I_4^R(x,t),
\end{align*}
where $0< \th \ll 1$ is an as-of-yet unspecified parameter and $A_R^*=\{ z: R/2\leq |z| <2\lambda R \}$.
We have by H\"older's inequality that
\[
|I_3^R(x,t)| \le C \norm{e^{-cx^2}}_{L^{3/2}(\R^3)} \norm{ v_0}_{L^3(B(x,\th R))} \le C \om(\th),
\]
and 
\EQ{
|I_4^R(x,t)| & \le C \int _{A_R^*} e^{-c \th^2 R^2}  | v_0(z)| \,dz 
\\
&\le C e^{-c \th^2 R^2}  \norm{ v_0}_{L^3(A_R^*)}  \norm{ 1}_{L^{3/2}(A_R^*)} 
\le C e^{-c \th^2 R^2}  R^2.
}
Therefore, for $R>1$, (and we drop the $t$ dependence of $V_0$ below)
\EQ{
\norm{V_0}_{L^\I(A_R)} \le  C \om(\th) + C e^{-c \th^2 R^2}  R^2+ Ce^{-c R^2},
}
where the constants are independent of $R$ and $\theta$.  The above inequality is still valid if  $\lambda^kR$ replaces $R$ for $k\in \N$, indeed we have 
\EQ{
\norm{V_0}_{L^\I(A_{\lambda^kR})} \le  C \om(\th) + C e^{-c \th^2 (\lambda^kR)^2}  (\lambda^kR)^2+ Ce^{-c (\lambda^kR)^2}.
}
The right hand side is decreasing in $k$ for fixed $\theta$ and $R$ and we conclude that
\[
 \norm{V_0}_{L^\infty({|x|\geq R})}\leq 
\sup_{k\in \N} \norm{V_0}_{L^\I(A_{\lambda^kR})} \leq C \om(\th) + C e^{-c \th^2 R^2}  R^2+ Ce^{-c R^2}.
\]
If $q\in (3,\I)$ we have
\begin{align*}
\norm{V_0}_{L^q(|x|\geq R)} &\leq C \norm{V_0}_{L^\I(|x|\geq R)}^{1-3/q} \norm{V_0}_{L^{3}_w}^{3/q} \\&\leq C ( C\om(\th) +   Ce^{-c \th^2 { R}^2}  { R}^2+ C e^{-c { R}^2})^{1-3/q} \norm{V_0}_{L^{3}_w}^{3/q}
.\end{align*}

We now construct $\Theta(R)$. Let $\epsilon_k=2^{-k}$ for $k\in \N$. For each $\epsilon_k$, choose $\th_k>0$ sufficiently small so that 
\[C \om(\th_k) \leq \frac  {\e_k^{q/(q-3)}} {2 C^{q/(q-3)}\| V_0 \|_{L^{3}_w}^{3/(q-3)}}.\] Then choose $R_k$ sufficiently large so that $R_k>R_{k-1}$ and
\[ C e^{-c \th_k^2 R_k^2}  R_k^2 + Ce^{-c R_k^2} \leq \frac  {\e_k^{q/(q-3)}} {2C^{q/(q-3)}\| V_0 \|_{L^{3}_w}^{3/(q-3)}}.\]
Finally, let
\[
\Theta (R)=
\begin{cases}
1 &\text{if } 0<R<R_1
\\ \epsilon_k &\text{if } R_k\leq R< R_{k+1}
\end{cases},
\]
which completes our proof.
\end{proof}
\begin{remark}
(i)
The decay rate in Lemma \ref{lemma:V0decay} depends not only on $\norm{v_0}_{L^3(A_1)}$, but also on $\om(r)$, see \eqref{w1.decay} above. It is worth noting that there is no decay rate that applies to all $v_0$ bounded in $L^{3}_w$.  Indeed, there is a constant $c_0>0$, a point $x_ 0 \in A_1$, and a sequence of $\la$-DSS  $v_0^k\in L^{3}_w$, $k \in \N$, such that $\norm{v_0^k}_{L^{3}_w(\R^3)} \le 1$ and, for all $k$ sufficiently large,
\[
 \inf_{ B(x_k,1)} |V_0^k| \ge c_0, \quad x_k = \la^k x_0 .
\] In particular, choose any $x_0 \in A_1$ not on its boundary, and $r_0>0$ so that $B(x_0,r_0) \subset A_1$. For any integer $k\ge \log_{\la} r_0^{-1}$, we have $\la^{-k} \le r_0$. Let $v_0^k(x)=0$ for $ x \in A_1 \backslash B(x_0,\la^{-k})$ and $v^k_0(x) = c_1\la^k$ if $x \in B(x_0, \la^{-k})$ for some constant $c_1>0$.
Then $\norm{v_0^k}_{L^{3,\I}(\R^3)} \le C \norm{v_0^k}_{L^3(A_1)}\le 1$ for suitable choice of $c_1$ independent of $k$.
We have  $v_0 = c_1$ in $B(x_k,1)$,
$x_k = \la^k x_0$. Thus, for $ x \in B(x_k,1)$, 
\[
V_0(x,t) \ge \int_{B(x_k,1)} e^{-4c} v_0(y)\,dy=c_0: = \frac {4\pi}3  e^{-4c}c_1.
\]

(ii) If we assume more regularity on $v_0$, then we can get explicit decay rate of $V_0$. For example, if $v_0 \in L^q(A_1)$, 
 $ 3<q \le \I$, then for all $ 3<p \le \I$,
\begin{equation}\label{eq3.12}
\sup_{t \in [1,\lambda^2]} \norm{ V_0(\cdot,t)}_{L^p(|x|>R)}  \le C \norm{v_0}_{L^q(A_1)} R^{-\sigma}
\quad
\forall R\gg 1,
\end{equation}
where $\sigma = 1-3/q$ for $p \in [q,\I]$, and $\sigma = 1-3/p$ for $3<p<q$. This shows that our assumption $v_0 \in L^3(A_1)$ is in borderline. Note \eqref{eq3.12} does not depend on $\omega$-like functions.
The proof of \eqref{eq3.12} is omitted since it is not used.
\end{remark}

The main lemma of this section connects solutions of the heat equation to the boundary data characterized by  Assumption \ref{AU_0}.
\begin{lemma}\label{th:2.1}
Suppose $v_0$ satisfies the assumptions of Theorem \ref{thrm:main} and let $x,t,y,s$ satisfy \eqref{variables}.  Then
\begin{equation}\label{def:U0} 
U_0(y,s)= {\sqrt {2t}} (e^{t\Delta}v_0)(x), 
\end{equation}satisfies Assumption \ref{AU_0} with $T=\log \lambda$ and any $q \in (3,\I]$.
\end{lemma}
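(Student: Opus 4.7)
The plan is to verify each bullet of Assumption \ref{AU_0} using three inputs: the smoothing properties of the heat semigroup on $L^3_w$ data, the scaling identities produced by the self-similar change of variables \eqref{variables}, and Lemma \ref{lemma:V0decay} for spatial decay.

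\textbf{Structural properties.} Since $v_0 \in L^3_w \subset L^1_{loc}$, $V_0(\cdot,t) = e^{t\Delta}v_0$ is $C^\infty$ for every $t > 0$ by classical heat-kernel smoothing; this immediately yields $U_0 \in C^1(\R^3 \times \R)$ and $\partial_s U_0 \in L^\infty(0,T; L^\infty_{loc}(\R^3)) \subset L^\infty(0,T; L^{6/5}_{loc}(\R^3))$. The heat semigroup preserves $\nabla\cdot = 0$, and multiplying by the scalar $\sqrt{2t}$ does not affect this, so $U_0$ is divergence free. The DSS relation $v_0(x) = \lambda v_0(\lambda x)$ propagates through the Gaussian convolution to $V_0(x,t) = \lambda V_0(\lambda x, \lambda^2 t)$ by a direct change of variables; unpacking this in self-similar coordinates collapses the scaling factors and produces $U_0(y, s + \log\lambda) = U_0(y, s)$, giving periodicity with $T = \log\lambda$. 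Finally, a direct computation confirms that the self-similar transform \eqref{ansatz2}--\eqref{variables} maps $\partial_t V_0 = \Delta V_0$ into the linear equation $\partial_s U_0 - \Delta U_0 - U_0 - y\cdot \nabla U_0 = 0$.

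\textbf{Integrability on $\R^3$.} The change of variables $x = \sqrt{2t}\, y$ yields the scaling identity
\begin{equation}\notag
\|U_0(\cdot,s)\|_{L^q(\R^3)} = (2t)^{(q-3)/(2q)} \|V_0(\cdot,t)\|_{L^q(\R^3)}.
\end{equation}
Combined with the standard heat estimate $\|e^{t\Delta}f\|_{L^q(\R^3)} \lesssim t^{-\frac{3}{2}(\frac{1}{3}-\frac{1}{q})} \|f\|_{L^3_w(\R^3)}$ valid for $3 < q \leq \infty$, the powers of $t$ cancel exactly, so $\|U_0(\cdot,s)\|_{L^q} \leq C\|v_0\|_{L^3_w}$ uniformly in $s \in [0,T]$. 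This handles both $q = 4$ and any $q \in (3,\infty]$ simultaneously.

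\textbf{Decay at infinity.} The localized version of the same scaling,
\begin{equation}\notag
\|U_0(\cdot,s)\|_{L^q(|y|>R)} = (2t)^{(q-3)/(2q)} \|V_0(\cdot,t)\|_{L^q(|x| > \sqrt{2t}\,R)},
\end{equation}
combined with Lemma \ref{lemma:V0decay} (applied, after a trivial DSS rescaling if needed, on the $t$-interval $[1/2, \lambda^2/2]$ corresponding to one period $s \in [0,\log\lambda]$) gives $\|U_0(\cdot,s)\|_{L^q(|y|>R)} \leq C\,\Theta(\sqrt{2t}\,R) \leq C\,\Theta(R)$, since $\sqrt{2t} \geq 1$ on this period and $\Theta$ may be taken monotone non-increasing. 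This provides the required decay function for Assumption \ref{AU_0}. No serious obstacle arises; the only delicate point—the absence of an explicit decay rate for the heat evolution of $L^3_w$ data in terms of $\|v_0\|_{L^3_w}$ alone—has already been resolved inside Lemma \ref{lemma:V0decay} by constructing $\Theta$ from a $v_0$-dependent local oscillation modulus $\omega$.
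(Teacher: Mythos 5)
Your proof is correct and follows the same overall strategy as the paper: the structural properties (divergence-free, $T$-periodicity, $LU_0=0$, $C^1$ smoothness, hence the $L^{6/5}_{loc}$ bound on $\partial_s U_0$) come from pushing the heat evolution through the self-similar transform \eqref{variables}, and the decay function $\Theta$ comes from Lemma \ref{lemma:V0decay}. The one place you genuinely diverge is the global bound $U_0\in L^\infty(0,T;L^4\cap L^q(\R^3))$: the paper splits $\R^3$ into $\{|y|>1\}$, handled by Lemma \ref{lemma:V0decay}, and $\{|y|\le 1\}$, handled by the $C^1$ (hence locally bounded) regularity of $U_0$; you instead invoke the Lorentz-space smoothing estimate $\|e^{t\Delta}f\|_{L^q}\lesssim t^{-\frac32(\frac13-\frac1q)}\|f\|_{L^3_w}$ and observe that the scaling factor $(2t)^{(q-3)/(2q)}$ cancels the time singularity exactly. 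Your route requires one extra standard ingredient (O'Neil/weak-type Young for the Gaussian kernel) but in exchange yields a bound depending only on $\|v_0\|_{L^3_w}$ rather than on local quantities, and it makes the criticality of the exponent $q=3$ transparent. You are also slightly more careful than the paper on two small points: the exterior domain transforms to $\{|x|>\sqrt{2t}\,R\}$ rather than $\{|x|>R\}$ (harmless since $\sqrt{2t}\ge 1$ and $\Theta$ is non-increasing), and the period $s\in[0,\log\lambda]$ corresponds to $t\in[1/2,\lambda^2/2]$ rather than $[1,\lambda^2]$, which is reconciled by a DSS rescaling exactly as you indicate.
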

\begin{proof}
Since $v_0$ is divergence free and $\lambda$-DSS, $e^{t\Delta}v_0$ is the divergence free, $\lambda$-DSS solution to the heat equation for $(x,t)\in \R^3\times [0,\infty)$.  Under the change of variables \eqref{variables}, it follows that $U_0$ is divergence free, $T$-periodic for $T=\log \lambda$, and satisfies
\begin{equation}\label{eq:periodicU0}
LU_0= \partial_s U_0(y,s)-\Delta U_0(y,s) -U_0(y,s)-y\cdot\nabla U_0(y,s) =0,
\end{equation}
for all $(y,s)\in \R^3\times \R$. Inclusion in $C^1$ comes from the smoothing effect of the heat kernel. 
This also implies that $\partial_s U_0\in L^\infty(0,T;L_{loc}^{6/5}(\R^3))$.  
By Lemma \ref{lemma:V0decay} we know $U_0\in L^\infty(0,T;L^q(|x|>1) )$ and, since it is in $C^1(\R^4)$, it is also in $L^\infty(0,T;L^q(|x|\leq 1) )$. Hence $U_0\in L^\infty(0,T;L^q(\R^3) )$.
 The last bound in Assumption \ref{AU_0} is a direct consequence of Lemma \ref{lemma:V0decay}.
\end{proof}

\section{Discretely self-similar solutions to 3D NSE}\label{sec:DSS}

In this section we prove Theorem \ref{thrm:main}. 

\begin{proof}[Proof of Theorem \ref{thrm:main}]
By Lemma \ref{th:2.1}, $U_0(y,s)$ defined by \eqref{def:U0} satisfies Assumption \ref{AU_0}.
Let $(u,p)$ be the time-periodic weak solution described in Theorem \ref{thrm:existenceOnR3}.
Let $v(x,t)= u(y,s)/\sqrt{2t}$ and $\pi(x,t)=p(y,s)/2t$ where $y= x/\sqrt{2t}$ and $s=\log (\sqrt{2t})$.
Then $(v,\pi)$ is a distributional solution to \eqref{eq:NSE}. Indeed, if we let $\zeta(x,t) = \frac 1{2t} f(y,s)$ where $f(y,s)$ is the test vector 
in the weak form \eqref{u.eq-weak} of the $u$-equation, and note that
\[
\pd_t \zeta(x,t) = \frac 1{(2t)^2} (\pd_s -2 - y \cdot \nb _y)f(y,s),
\]
we recover the weak form of the Navier-Stokes equations \eqref{eq:NSE} for $v$ with test vector $\zeta$ from
the weak form  \eqref{u.eq-weak} for $u$.

Note
\begin{equation} \notag  v-e^{t\Delta}v_0 \in L^\infty(1,\lambda^2;L^2(\R^3))\cap L^2(1,\lambda^2;H^1(\R^3)). \end{equation}
The $\lambda$-DSS scaling property implies
\begin{equation}\notag 
||v(t)-e^{t\Delta}v_0||_{L^2(\R^3)}^2\lesssim t^{1/2} \sup_{1\leq \tau\leq \lambda^2} ||v(\tau)-e^{\tau \Delta}v_0||_{L^2(\R^3)}^2,
\end{equation}
and
\begin{equation}\notag
\int_0^{\lambda^2} \int || \nabla (v(t)-e^{t\Delta}v_0)||_2^2\,dx\,dt \lesssim \bigg( \sum_{k=0}^\infty  \lambda^{-k}  \bigg) \int_1^{\lambda^2}\int || \nabla (v(t)-e^{t\Delta}v_0)||_2^2\,dx\,dt.
\end{equation}
It follows that
\begin{equation}\label{ineq:time0}  v-e^{t\Delta}v_0 \in L^\infty(0,\lambda^2;L^2(\R^3))\cap L^2(0,\lambda^2;H^1(\R^3)). \end{equation}
We now check that $v$ is a local Leray solution to \eqref{eq:NSE}. 

\emph{Locally finite energy and enstrophy:} This follows from inequality \eqref{ineq:time0} noting that $v_0\in L^2_{u\,loc}$ implies $e^{t\Delta}v_0$ has uniformly locally finite energy and enstrophy.

\emph{Convergence to initial data:} The fact that $||v(t)-e^{t\Delta}v_0||_{L^2(\R^3)}\lesssim t^{1/4}$ implies convergence to zero in the  $L^2_{loc}(\R^3)$ norm.  Using the embedding $L^3_w\subset M^{2,1}\subset L^2_{-3/2}$ where $L^2_{-3/2}$ is the weighted $L^2$ space 
(see Comment 4 after Theorem \ref{thrm:main}  for the definition)
as well as the fact that $e^{t\Delta}v_0\to v_0$ in $L^{2}_{-3/2}(\R^3)$ (see \cite[Remark 3.2]{Kato}), and this space embeds in $L^2_{loc}$, we conclude that $e^{t\Delta}v_0\to v_0$ in $L^{2}_{loc}(\R^3)$ as $t\to 0^+$.  It follows that
\begin{equation}\notag
\lim_{t\to 0} ||v(t)-v_0||_{L^2_{loc}(\R^3)}=0. 
\end{equation}

\emph{ {Decay at spatial infinity:}} For any $R>0$, the $\lambda$-DSS scaling implies $v(t)-e^{t\Delta}v_0 \in L^2(0,R^2;\R^3)$.  Together with the fact that $e^{t\Delta}v_0(x)$ satisfies the same decay requirements at spatial infinity as a local Leray solution (this is easy to see given that $v_0\in L^2_{u\,loc}(\R^3)$), this implies that 
\begin{equation}\notag 
\lim_{|x_0|\to \infty} \int_0^{R^2}\int_{B_R(x_0 )} | v(x,t)|^2\,dx \,dt=0.
\end{equation}

\emph{Local energy inequality:} This property for $(v,\pi)$ is inherited from the suitability of $(u,p)$ in the self-similar variables.
Indeed,  if we let $\phi(x,t) = \frac 1{\sqrt{2t}} \psi(y,s)$ where $\psi(y,s)$ is the test function
in the local energy inequality \eqref{ineq:localEnergy} for $(u,p)$, 
 and note that
\[
\pd_t \phi(x,t) = \frac 1{(2t)^{3/2}} (\pd_s -1 - y \cdot \nb _y)\psi(y,s),
\]
we recover the local energy inequality \eqref{eq:localEnergyNSE} for $(v,\pi)$ with test function $\phi$.
\end{proof}

\begin{remark} In the definitions $\zeta(x,t) = \frac 1{2t} f(y,s)$ and  $\phi(x,t) = \frac 1{\sqrt{2t}} \psi(y,s)$ in the above proof,
the exponents of $\sqrt {2t}$ in front of  $f$ and $\psi$ can be understood by dimension analysis.
The dimensions of the physical variables $x,t,v,\pi$ are $1,2,-1,-2$ respectively and are reflected in the ansatz \eqref{ansatz2}.  \emph{All self-similar variables $y,s,u,p,f,\psi$ are dimension free}, hence so are the weak form and local energy inequality for $(u,p)$ and therefore those for $(v,\pi)$. As a result, the dimension of $\zeta$ is $-2$, and that of $\phi$ is $-1$. Also note that
$\zeta$ should have the same dimension as $\phi v$, which is correct.
\end{remark}

\section{On existence of self-similar solutions}\label{sec:NSE}
\label{sec5} 

As mentioned in the comments prior to the statement of Theorem \ref{thrm:selfsimilardata}
our ideas can be extended to give  simple proofs of the existence of self-similar solutions to the 3D Navier-Stokes equations with $(-1)$-homogeneous initial data $v_0$ satisfying \eqref{ineq:decayingdata}. 
Recall that the first construction is given in  \cite{JiaSverak} and the second in \cite{KT-SSHS}. In the first subsection below we show that we can get self-similar solutions as limits of $\lambda_k$-DSS solutions with $\lambda_k \to 1_+$ as $k \to \I$. %
In the second subsection we present a third construction of self-similar solutions following and simplifying the ideas of the previous sections: It  constructs solutions to stationary Leray equations directly, based on our new explicit a priori bound.

\subsection{Self-similar solutions as limits of DSS solutions}
\begin{proof}[Proof of Theorem \ref{thrm:selfsimilardata}] Let $v_0$ be $(-1)$-homogeneous and satisfy the assumptions of Theorem \ref{thrm:main}.  Then, $v_0$ is $\lambda$-DSS for every factor $\lambda>1$. 
For $k\in \mathbb N$, let $\lambda_k=2^{(2^{-k})}$ so that $\lambda_{k+1}^2 = \lambda_k$.  
This sequence  decreases strictly to $1$ as $k\to\infty$. 
Let $v_k$ be the $\lambda_k$-DSS local Leray solution obtained from Theorem \ref{thrm:main} (or from \cite[Theorem 1.1]{Tsai-DSSI}) with scaling factor $\lambda_k$.  Working within the local Leray class provides \emph{a priori} bounds for all $v_k$. In particular,  letting $\mathcal N (v_0)$ denote the class of local Leray solutions with initial data $v_0$, the following estimate is well known for local Leray solutions (see \cite{JiaSverak}): for all $\tilde v\in \mathcal N (v_0)$ and $r>0$ we have
\begin{equation}
\esssup_{0\leq t \leq \sigma r^2}\sup_{x_0\in \RR^3} \int_{B_r(x_0)}\frac {|\tilde v|^2} 2 \,dx\,dt + \sup_{x_0\in \RR^3}	\int_0^{\sigma r^2}\int_{B_r(x_0)} |\nabla \tilde v|^2\,dx\,dt <C \sigma ,
\end{equation}
where \begin{equation} \sigma(r) =c_0\, \min\bigg\{r^2\bigg( \sup_{x_0\in \RR^3} \int_{B_r(x_0)} \frac {|v_0|^2}2\,dx \bigg)^{-2} , 1  \bigg\},
\end{equation}
for a small universal constant $c_0$.  Note that $v_0$ belongs to the Morrey space $M^{2,1}$ by the embedding $L^3_w\subset M^{2,1}$.  This implies that $\sigma(r)r^2 \to \infty$ as $r\to \infty$ and, since $v_k\in \mathcal N (v_0)$, we obtain \emph{a priori} bounds for all $v_k$ across the time interval $[0,2]$ which are independent of $k$.  This allows us to pass to the limit to obtain a local Leray solution $v\in \mathcal N (v_0)$.  Then, for any $k$, and sufficiently large $l$, it follows that $v_l$ is DSS with scaling factor $\lambda_k$, and this property is inherited by $v$. 

To show that $v$ is SS we pass to the time periodic variables $y=x/\sqrt{2t}\in\RR^3$ and $s=\log\sqrt{2t}\in \RR$ and write $u_k(y,s)=\sqrt{2t} v_k(x,t)$ where $u_k$ is time periodic with period $T_k=\log \lambda_k$, $T_{k+1}=\frac 12 T_k$. Note that all $u_l$, $l>k$, are $T_k$ periodic, and this property is inherited by $u$. 
Let $U_0$ be defined as in  \eqref{def:U0} which is now constant in $s$. Note that $u$ is $T_k$ periodic for all $k$, and $u(y,s)-U_0(y)$ is weakly continuous $L^2(\R^3)$-valued vector fields. Hence $u$ must be constant in $s$.

Therefore $u$ solves the stationary Leray equations, which proves that $v$ is a self-similar solution on $\R^3\times (0,\infty)$.
\end{proof}

\subsection{Third construction}
Alternatively, we may adapt the approach of Sections \ref{sec2}-\ref{sec:DSS} to the stationary Leray system and construct self-similar solutions directly without involving the DSS class.  
\begin{proof}[Proof of Theorem \ref{thrm:selfsimilardata}]
Let $U_0$ and $W$ be defined as in Sections  \ref{sec2} and  \ref{sec3}.  Then, $W$ satisfies the estimates \eqref
{ineq:Wsmall}--\eqref{LW.est} with $q=\I$, and
 is constant in the time variable.  Our first goal is to find a divergence free function $u\in L^2_{u\,loc}(\R^3)$ satisfying
\begin{align}\label{eq:stationary1}
&   -(\nabla u,\nabla f)+(u+y\cdot\nabla u-u\cdot\nabla u ,f)    =0,
\end{align}
for all $f\in \mathcal V$, and achieve this by solving a perturbed system for $U=u-W$.  The variational form of the perturbed, stationary Leray system is
\begin{align}\label{eq:stationary2}
		- (\nabla U,\nabla f) 
		+ (U+y\cdot \nabla U, f)
		- (U \cdot\nabla U, f)
=
		(W\cdot\nabla U+U\cdot \nabla W,f)+\langle \mathcal{R}(W),f\rangle,
\end{align}
which should hold for all $f\in \mathcal V$.  Solutions to this system can be approximated by a Galerkin scheme, the elements of which are obtained via Brouwer's fixed point theorem. In particular, let $\{ a_k \}\subset \mathcal V$ be an orthonormal basis of $H$.  For $k \in \mathbb N$, the approximating solution \[
U_k(y)=\sum_{i=1}^k b_{ki}a_i(y),
\] is required to satisfy
\begin{align}\label{eq:stationaryODE}
 & \sum_{i=1}^k A_{ij}b_{ki} +\sum_{i,l=1}^k B_{ilj} b_{ki}b_{kl} +C_j=0,%
\end{align}
for $j\in \{1,\ldots,k\}$,
where
\begin{align}
\notag A_{ij}&=- (\nabla a_{i},\nabla a_j) 
		+ (a_i+y\cdot \nabla a_i, a_j) 
		 -(    a_i\cdot \nabla W,a_j)
		- (W\cdot\nabla a_i, a_j)
\\\notag B_{ilj}&=- (a_i \cdot\nabla a_l, a_j)
\\\notag C_j&=-\langle \mathcal R (W),a_j\rangle.
\end{align}
Let $P(x):\R^k\to\R^k$ denote the mapping  
\[
P(x)_j=\sum_{i=1}^k A_{ij}x_{i} +\sum_{i,l=1}^k B_{ilj} x_{i}x_{l} +C_j.
\]
For $x\in \R^k$ let $\xi=\sum_{j=1}^k x_j a_j$.  We have
\EQ{
\label{eq4.6}
P(x)\cdot x &= -\frac 1 2 ||\xi||_{L^2}^2
	 -   \frac 1 2 ||\nabla \xi||_{L^2}^2 +(\xi \cdot \nabla \xi,W) - \bka{\cR(W),\xi}
\\
& \le  -\frac 1 4 ||\xi||_{L^2}^2  -   \frac 1 4 ||\nabla \xi||_{L^2}^2 +C_*^2 \norm{\cR(W)}_{H^{-1}}^2
\\
& \le  -\frac 1 4 |x|^2 + C_*^2 \norm{\cR(W)}_{H^{-1}}^2,
}
using the smallness of $\norm{W}_{L^\I}$. We conclude that
\[
P(x)\cdot x< 0,\quad \text{if } |x|=\rho := 3C_* \norm{\cR(W)}_{H^{-1}}.
\]
By Brouwer's fixed point theorem, there is one $x$ with $|x|<\rho$ such that $P(x)=0$. (Note this $\rho$ is independent of $k$. This is a feature of the Leray system, not of the Navier-Stokes.) Then $U_k=\xi$ is our approximation solution satisfying
\eqref{eq:stationaryODE}, with  \emph{a priori} bound
\[
 \norm{U_k}_{L^2}^2 + \norm{\nabla U_k}_{L^2} ^2\le 4C_*^2 \norm{\cR(W)}_{H^{-1}}^2,
\]
by the first inequality of \eqref{eq4.6} and $P(x)=0$.
This bound is sufficient to find a subsequence with a weak limit in $H^1(\R^3)$ and a strong limit in $L^2(K)$ for any compact set $K$ in $\R^3$, that is, there exists a solution $U$ to \eqref{eq:stationary2} which satisfies $U\in H^1(\R^3)$. A solution to \eqref{eq:stationary1} is now obtained by setting $u=U+W$.  Note that $u\in H^1_{loc}\cap L^q$, $3<q\le 6$, and, following \cite[pp. 287-288]{NRS} or \cite[pp. 33-34]{Tsai-ARMA}, if we define
\[
p = \sum_{i,j} R_i R_j (u_i u_j),
\]
where $R_i$ denotes the Riesz transforms,
then $(u,p)$ solve the stationary Leray system in the distributional sense and, furthermore, by Calderon-Zygmund estimates,
\[
||p||_{L^{q/2}(\R^3)}<C ||u||_{L^q(\R^3)}^2,\quad (3<q\le 6).
\]

A solution pair $(v,\pi)$ to \eqref{eq:NSE} is now obtained by passing from the self-similar to the physical variables at time $t=1/2$ and extending to all times using the self-similar scaling relationships.  It remains to show that $(v,\pi)$ is a local Leray solution. 

A regularity result for a generalized stationary Stokes system (see \cite[Proposition 1.2.2]{Temam}) leads to higher regularity for the pair $(U,p)$ on compact subsets of $\R^3$.  In particular, $U$ and $p$ are infinitely differentiable.  Since $W,\, U\in C^\infty$, so is $u$.  This guarantees that $v$ and $\pi$ are smooth in the spatial variables.  Smoothness in time is apparent from the self-similar scaling properties of $v$ and $\pi$. Therefore, testing against the equation for $v$ against $\phi\,v $ where $\phi\in C_0^\infty ( \R^3\times (0,\infty))$ is non-negative and integrating by parts confirms that the pair $(v,\pi)$ satisfies the local energy identity.  The remaining conditions from Definition \ref{def:localLeray} follow as in the proof of Theorem \ref{thrm:main}.
\end{proof}

\section*{Acknowledgments}
The research of
both authors was partially supported by the Natural Sciences and
Engineering Research Council of Canada grant 261356-13.

Zachary Bradshaw, Department of Mathematics, University of British
Columbia, Vancouver, BC V6T 1Z2, Canada;
e-mail: zbradshaw@math.ubc.ca

\medskip

Tai-Peng Tsai, Department of Mathematics, University of British
Columbia, Vancouver, BC V6T 1Z2, Canada;
e-mail: ttsai@math.ubc.ca


\begin{thebibliography}{XX}
\bibitem{BCI} Biryuk, A., Craig, W. and Ibrahim, S., Construction of suitable weak solutions of the Navier-Stokes equations. Stochastic analysis and partial differential equations, 1-18, Contemp. Math., 429, Amer. Math. Soc., Providence, RI, 2007. 
\bibitem{CKN} Caffarelli, L., Kohn, R. and Nirenberg, L., Partial regularity of suitable weak solutions of the Navier-Stokes equations. Comm. Pure Appl. Math. 35 (1982), no. 6, 771-831.
%
\bibitem{CP} Cannone, M. and Planchon, F., Self-similar solutions for Navier-Stokes equations in $\R^3$. Comm. Partial Differential Equations 21 (1996), no. 1-2, 179-193. 
\bibitem{Galdi} Galdi, G. P., \emph{An introduction to the mathematical theory of the Navier-Stokes equations. Vol. II. Nonlinear steady problems.} Springer Tracts in Natural Philosophy, 39. Springer-Verlag, New York, 1994.
\bibitem{GS06} Galdi, G. P. and Silvestre, A. L., Existence of time-periodic solutions to the Navier-Stokes equations around a moving body. Pacific J. Math. 223 (2006), no. 2, 251-267.
\bibitem{GiMi} Giga, Y. and Miyakawa, T., Navier-Stokes flows in $\R^3$ with measures as initial vorticity and the Morrey spaces, Comm. Partial Differential Equations 14 (1989), 577-618.
%
%
%
\bibitem{Grujic} Gruji\'c, Z., Regularity of forward-in-time self-similar solutions to the 3D NSE, Discrete Contin. Dyn. Syst. 14 (2006), 837-843.
\bibitem{hopf} Hopf, E., \"Uber die Anfangswertaufgabe f\"ur die hydrodynamischen Grundgleichungen. Math. Nachr. 4, (1951). 213-231.
\bibitem{JiaSverak} Jia, H. and \v Sver\'ak, V., Local-in-space estimates near initial time for weak solutions of the Navier-Stokes equations and forward self-similar solutions. Invent. Math. 196 (2014), no. 1, 233-265.
\bibitem{KMT2012} Kang, K., Muira, H. and Tsai, T.-P., Asymptotics of small exterior Navier-Stokes flows with non-decaying boundary data. Comm. Partial Differential Equations 37 (2012), no. 10, 1717-1753.
\bibitem{Kato} Kato, T., Strong solutions of the Navier-Stokes equation in Morrey spaces. Bol. Soc. Brasil. Mat. (N.S.) 22 (1992), no. 2, 127-155.
\bibitem{KiSe} Kikuchi, N. and Seregin, G., Weak solutions to the Cauchy problem for the Navier-Stokes equations satisfying the local energy inequality. Nonlinear equations and spectral theory, 141-164, Amer. Math. Soc. Transl. Ser. 2, 220, Amer. Math. Soc., Providence, RI, 2007
\bibitem{Koch-Tataru}
Koch, H. and Tataru, D., Well-posedness for the Navier-Stokes equations.
Adv. Math. 157 (1), 22--35 (2001)
\bibitem{KT-SSHS} Korobkov, M. and Tsai, T.-P., Forward self-similar solutions of the Navier-Stokes equations in the half space, submitted.
\bibitem{LR} Lemari\'e-Rieusset, P. G., \emph{Recent developments in the Navier-Stokes problem.} Chapman Hall/CRC Research Notes in Mathematics, 431. Chapman Hall/CRC, Boca Raton, FL, 2002.
\bibitem{LuoTsai} Luo, Y. and Tsai, T.-P., Regularity criteria in weak $L^3$ for 3D incompressible Navier-Stokes equations, Funkcialaj Ekvacioj, to appear. arXiv:1310.8307
\bibitem{leray} Leray, J., Sur le mouvement d'un liquide visqueux emplissant l'espace. (French) Acta Math. 63 (1934), no. 1, 193-248. 
\bibitem{Morimoto} Morimoto, H., On existence of periodic weak solutions of the Navier-Stokes equations in regions with periodically moving boundaries, J. Fac. Sci. Univ. Tokyo Sect. IA Math. 18 (1971/72), 499-524.
\bibitem{NRS} Ne\v cas, J., {R\accent23 u\v {z}i\v {c}ka}, M., and {\v Sver\'ak}, V., On Leray's self-similar solutions of the Navier-Stokes
  equations, Acta Math. 176 (1996), 283--294.
\bibitem{Temam} Temam, R., \emph{Navier-Stokes equations. Theory and numerical analysis.} Reprint of the 1984 edition. AMS Chelsea Publishing, Providence, RI, 2001.
\bibitem{Tsai-ARMA} 
 Tsai, T.-P., On Leray's self-similar solutions of the
  Navier-Stokes equations satisfying local energy estimates, Archive
  for Rational Mechanics and Analysis 143 (1998), 29--51.
\bibitem{Tsai-DSSI}Tsai, T.-P., Forward discretely self-similar solutions of the Navier-Stokes equations. Comm. Math. Phys. 328 (2014), no. 1, 29-44.
\end{thebibliography}
\end{document}